\documentclass[a4paper]{article}

\usepackage[english]{babel}
\usepackage{amsmath}
\title{Syntactic categories for Nori motives}

\author{Luca Barbieri-Viale \and Olivia Caramello \\  \and Laurent Lafforgue}
\date{}

\usepackage[all]{xy}
\usepackage[T1]{fontenc}
\usepackage{tikz,color,makeidx, enumerate}
\usetikzlibrary{matrix,arrows}

\newcommand{\cC}{\mathcal{C}}
\newcommand{\Hom}{\operatorname{Hom}}

\newcommand{\Rmod}{\text{$R$-{\sf mod}}}
\newcommand{\Rpmod}{\text{$R'$-{\sf mod}}}
\newcommand{\into}{\hookrightarrow}
\newcommand{\cA}{\mathcal{A}}
\newcommand{\by}[1]{\stackrel{#1}{\rightarrow}}
\newcommand{\End}{\operatorname{End}} 
\newcommand{\Th}{\operatorname{Th}} 
\newcommand{\colim}[1]{\mathop{\rm
Colim}_{\buildrel\over{#1}}}
\newcommand{\longby}[1]{\stackrel{#1}{\longrightarrow}}


\mathcode`\<="4268  
\mathcode`\>="5269  
\mathcode`\.="313A  
\mathchardef\semicolon="603B 
\mathchardef\gt="313E
\mathchardef\lt="313C

\newcommand{\cod}
 {{\rm cod}}

\newcommand{\Cont}
 {{\bf Cont}}

\newcommand{\dom}
 {{\rm dom}}

\newcommand{\empstg}
 {[\,]}

\newcommand{\epi}
 {\twoheadrightarrow}

\newcommand{\hy}
 {\mbox{-}}

\newcommand{\im}
 {{\rm Im}}

\newcommand{\imp}
 {\!\Rightarrow\!}

\newcommand{\Ind}[1]
 {{\rm Ind}\hy #1}

\newcommand{\mono}
 {\rightarrowtail}
 
\newcommand{\name}[1]
 {\mbox{$\ulcorner #1 \urcorner$}}

\newcommand{\ob}
 {{\rm ob}}

\newcommand{\op}
 {^{\rm op}}

\newcommand{\Set}
 {{\bf Set }}

\newcommand{\Sh}
 {{\bf Sh}}

\newcommand{\sh}
 {{\bf sh}}

\newcommand{\Sub}
 {{\rm Sub}}

\usepackage{mathrsfs}
\usepackage{amsmath,amsthm}
\usepackage{amssymb}

\newtheorem{theorem}{Theorem}[section]
\theoremstyle{definition}
\newtheorem{definition}[theorem]{Definition}
\newtheorem{lemma}[theorem]{Lemma}

\newtheorem{corollary}[theorem]{Corollary}
\newtheorem{remark}[theorem]{Remark}
\newtheorem{remarks}[theorem]{Remarks}

\begin{document}

\maketitle

\begin{abstract}
	We give a new construction, based on categorical logic, of Nori's $\mathbb Q$-linear abelian category of mixed motives associated to a cohomology or homology functor with values in finite-dimensional vector spaces over $\mathbb Q$.
	This new construction makes sense for infinite-dimensional vector spaces as well, so that it associates a $\mathbb Q$-linear abelian category of mixed motives to any (co)homology functor, not only Betti homology (as Nori had done) but also, for instance, $\ell$-adic, $p$-adic or motivic cohomology.
	We prove that the $\mathbb Q$-linear abelian categories of mixed motives associated to different (co)homology functors are equivalent if and only a family (of logical nature) of explicit properties is shared by these different functors. The problem of the existence of a universal cohomology theory and of the equivalence of the information encoded by the different classical cohomology functors thus reduces to that of checking these explicit conditions.
	
\end{abstract}

\tableofcontents

\section*{Introduction}

Nori's construction in the theory of motives (\cite{NM}) starts with a representation $T$ of an arbitrary diagram $D$ (which is not necessarily a category) in the category $k\textrm{-vect}_{\textrm{f}}$ of finite-dimensional vector spaces over a field $k$ or, more generally, the category $\Rmod_{\rm f}$ of finite-type modules over a Noetherian ring $R$. It constructs a factorisation of $T$ through a faithful exact $k$-linear (or $R$-linear) functor $F$ from an abelian $k$-linear (or $R$-linear) category ${\cal C}_{T}$ to the category $k\textrm{-vect}_{\textrm{f}}$ or $\Rmod_{\rm f}$ which is universal for this factorisation property.
    
This extremely general construction was applied by Nori to the relative Betti homology functor of pairs of finite-type schemes over a subfield of $\mathbb C$. This means that the objects of the diagram $D$ are triples $(X,Y,i)$ consisting in such a scheme $X$, a closed subscheme $Y$ of $X$ and a non-negative integer $i$. The arrows are of two different types: firstly there are arrows $(X,Y,i) \to (X',Y',i)$ associated to geometric morphisms $(X,Y) \to (X',Y')$ and secondly there are arrows $(X,Y,i)\to (Y,Z,i-1)$ associated to triples consisting in a scheme $X$, a closed subscheme $Y$ of $X$ and a closed subscheme $Z$ of $Y$. The representation $T$ associates to any such triple $(X,Y,i)$ the $i$-th Betti homology group of $X$ relatively to $Y$.
    
J. Ayoub and L. Barbieri-Viale have proved in \cite{ABV} that the abelian category constructed in this way from the restriction of the diagram $D$ to triples $(X, Y, i)$ with $i$ at most $1$ is equivalent to the abelian category of Deligne $1$-motives with torsion (for $i=0$ to Artin motives). 
    
On the other hand, A. Huber and S. M\"uller-Stach have proved in \cite{HMS} that the spectrum of Kontsevich's algebra of formal periods is a torsor under the motivic Galois group associated to Nori's category of mixed motives.
    
These two different results are strong indications that Nori's construction is very interesting.

In order to associate his universal abelian category ${\cal C}_{T}$ to a representation $T$ on a diagram $D$ as above, Nori first considers the case when $D$ is a finite diagram. In that case, the endomorphism ring of $T$ is an algebra of finite dimension over the coefficient field $k$ (or of finite-type as a module over the Noetherian coefficient ring $R$) and ${\cal C}_{T}$ can be constructed as the category of modules over this algebra which are finite-dimensional over $k$ (or of finite-type over $R$).

When $D$ is infinite, the category ${\cal C}_{T}$ is constructed as the filtered colimit of the categories associated to all the finite (full) subdiagrams of $D$.

The construction presented in this paper is entirely different, even though it solves the same universal factorisation problem, and it makes sense for any representation of a diagram $D$ in the category $\Rmod$ of modules over an arbitrary ring $R$.
    It uses the language and a few results of first-order categorical logic which are summarised or proved in the first part of the paper.

The first step in the construction consists in associating to the representation $T$ the so-called regular theory of $T$. The language of this theory consists in sorts associated to the objects of the diagram, function symbols associated to the arrows of the diagram as well as to the $R$-linear structure operations (addition and multiplication by elements of $R$) and constants corresponding to the zero elements of such module structures. The axioms of the theory consists of all the regular sequences written in this language which are satisfied by the representation $T$.
    
The second step in the construction consists in associating to the regular theory of $T$ its so-called syntactic category. The objects of the syntactic category of the regular theory of $T$ are all the (regular) formulas written in the language of $T$; the morphisms between two objects, that is between two such formulas, are formulas which are provably functional with respect to these two formulas, considered up to provable equivalence in the theory.

The syntactic category of the regular theory of $T$ is regular, as it is associated to a regular theory, and it is additive and $R$-linear by construction, but it is not abelian as it lacks quotients.
    
The third step consists in replacing this syntactic regular category by its effectivization, a construction which formally adds quotients of equivalence relations in a way which admits a fully explicit description. The main theorem of this paper (Theorem \ref{thm:main}) is that this category is abelian, $R$-linear and solves the universal factorisation problem.

The generality of this construction allows to associate an $R$-linear abelian category of ``mixed motives'' to any homology or cohomology functor with coefficients in a field or a ring which contains $R$. For instance, each of the usual cohomology or homology theories, such as Betti, $\ell$-adic, $p$-adic, De Rham, crystalline or motivic cohomology or homology, gives rise in this way to a $\mathbb Q$-linear abelian category of mixed motives.
    
As the construction proposed in this paper is explicit, it allows to give necessary and sufficient conditions for two representations defined on the same diagram $D$ to give rise to equivalent categories of mixed motives. In fact, it means that the regular theories of these representations are the same, i.e. that any regular sequence is verified by one of the representations if and only if it is verified by the other. Concretely it means that inclusion relation between ``definable'' (by suitable combinations of arrows coming from the diagram $D$ and the chosen coefficient ring $R$) subspaces of the image spaces are the same for the two representations (see Theorem \ref{thm:independence}(ii)).

Let us present in a few words the contents of the paper.
    The necessary ingredients from categorical logic are gathered in the first section, while the second section is devoted to the construction and study of the universal factorising abelian category.
    Section 2.1 reviews Nori's construction.
    Section 2.2 constructs the desired abelian category and proves that it is a solution of the universal factorisation problem. In fact, even under Nori's hypotheses, Nori's category and ours are equivalent but not isomorphic and our category verifies a slightly better factorisation property, replacing a factorisation up to isomorphism of functors with a factorisation with strict equality of functors.
    Section 2.3 gives conditions which allow to realise our category as a subcategory of bigger categories and provides a concrete description of the objects and morphisms of our category.
    Section 2.4 applies this description in order to understand when two categories associated to two different representations are equivalent.
    Section 2.5 studies concretely, in the case of finite-dimensional spaces over a field, the equivalence between our category and the already known representation of Nori's category as finite-dimensional comodules over some coalgebra. Even in that case, we get a new concrete description of the objects and morphisms of that category. 
    In section \ref{sec:applications} we apply to motives our general construction, discussing it also in relation to the classical construction of Nori motives. We deduce in particular a general criterion for the existence of categories of mixed motives.

    Future work in connection with the construction presented in section $2$ will explore the question of change of coefficient rings, the question of change of diagrams $D$, the tensor structure induced from K\"unneth-type formulas for representations $T$, and other aspects.

   The first named author and the third named author want to recognize here that the construction and the results of this paper are due to the second named author. They originated in a question raised by the first named author on the possibility of reinterpreting Nori's construction in terms of the theory of classifying toposes, which the third named author had talked to him about. The second named author has benefited from many hours of conversations on algebraic geometry with the third named author and the first named author.

\section{Syntactic categories}\label{sec:preliminaries}

The general theory of syntactic categories of logical theories has been introduced in \cite{MR}. For the classical results reviewed in the following sections we refer, besides \cite{MR}, to the standard references on topos theory such as \cite{GrothendieckT}, \cite{MM} and \cite{El}. A succinct overview of the relevant logical and topos-theoretic background is contained in \cite{OC}. 

\subsection{Some categorical preliminaries}

A word on terminology: as it is usual, when we say that a category possesses certain kinds of limits or colimits, we mean that the category comes equipped with a \emph{canonical choice} of limit or colimit for diagrams of the appropriate kind. On the other hand, when we speak of functors preserving certain kinds of limits or colimits, we do not require them to preserve such canonical choices of limits or colimits.  

A functor is said to be \emph{conservative} if it reflects isomorphisms. A balanced category is a category in which arrows which are both monomorphisms and epimorphisms are isomorphisms. Any abelian category is balanced. A faithful functor on a balanced category is conservative. Conversely, any conservative functor on a category with equalizers which preserves equalizers is faithful; indeed, for any pair of parallel arrows $(f, g)$, the condition $F(f)=F(g)$ is equivalent to the condition that $F(e)$ be an isomorphism, where $e$ is the equaliser of $f$ and $g$. So in the context of abelian categories and exact functors between them, conservativity is equivalent to faithfulness.

Given a faithful functor $F:{\cal A}\to {\cal B}$, it is natural to wonder under which conditions $F$ defines an equivalence of categories between ${\cal A}$ and a subcategory of $\cal B$. Notice that, unless the functor $F$ is full on objects (in the sense of the following definition), it might be hard to prove fullness. Indeed, if $F(a)=F(a')$ without there being an arrow $a\to a'$, any arrows of the form $F(g)\circ F(f)$, where $f:a''\to a$ and $g:a'\to a'''$ would belong to any subcategory of $\cal B$ containing the objects and arrows in the image of the functor $F$, without the composition $g\circ f$ being defined in $\cal A$. 

\begin{definition}
A functor $F:{\cal A}\to {\cal B}$ is \emph{full on objects} if for any objects $a$ and $a'$ such that $F(a)=F(a')$ there exists an arrow $f:a\to a'$ in $\cal A$ such that $F(f)=1_{F(a)}$.
\end{definition}

The notion of functor which is full on objects is particularly relevant for faithful functors; indeed, if $F$ is faithful then the arrow $f$ in the definition is necessarily an isomorphism, and, as shown by the following lemma, there is a well-defined subcategory of $\cal B$ which is equivalent to $\cal A$ via $F$.

\begin{lemma}\label{lemma:fullonobjects}
Let $F:{\cal A}\to {\cal B}$ be a faithful functor which is full on objects. Then ${\cal A}$ is equivalent, via $F$, to a subcategory $\im(F)$ of $\cal B$ defined as follows: the objects of $\im(F)$ are the objects of $\cal B$ of the form $F(a)$ for $a\in {\cal A}$, while the arrows $F(a)\to F(a')$ are the arrows of $\cal B$ of the form $F(f)$ where $f:a\to a'$ is an arrow in $\cal A$. 
\end{lemma}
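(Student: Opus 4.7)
The plan is to first verify that $\im(F)$, as specified, really is a subcategory of $\mathcal{B}$, and then to show that the corestriction $\overline F : \mathcal{A} \to \im(F)$ of $F$ is an equivalence. Essentially all the content is in the first step; once the subcategory is well-defined, the equivalence is almost immediate from the definitions.

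The crux is the following preliminary fact: faithfulness together with fullness on objects produces, for each pair $a, a' \in \mathcal{A}$ with $F(a) = F(a')$, a \emph{unique} isomorphism $\iota_{a, a'} : a \to a'$ over $1_{F(a)}$. Uniqueness is immediate from faithfulness. Existence of such an arrow is the hypothesis; applying it with the roles of $a$ and $a'$ swapped yields $\iota_{a', a} : a' \to a$, and then faithfulness forces $\iota_{a', a} \circ \iota_{a, a'} = 1_a$ and $\iota_{a, a'} \circ \iota_{a', a} = 1_{a'}$, since both sides have the same $F$-image as the relevant identity.

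Using these canonical isomorphisms, the hom-sets of $\im(F)$ are easily shown to be independent of the choice of preimages: given $F(a_1) = F(a_2)$ and $F(a_1') = F(a_2')$, the conjugation $f \mapsto \iota_{a_1', a_2'} \circ f \circ \iota_{a_2, a_1}$ bijects the arrows $a_1 \to a_1'$ with the arrows $a_2 \to a_2'$ in $\mathcal{A}$ and preserves their $F$-image, so the set of arrows in $\mathcal{B}$ of the form $F(f)$ depends only on $F(a_1)$ and $F(a_1')$. Closure of $\im(F)$ under identities is clear, since $1_{F(a)} = F(1_a)$; closure under composition uses the same device, namely, if $F(f)$ ends at $F(a_2) = F(a_3)$ and $F(g)$ starts there, then the composite $F(g) \circ F(f)$ is the $F$-image of $g \circ \iota_{a_2, a_3} \circ f$ in $\mathcal{A}$.

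Once this is established, the corestriction $\overline F : \mathcal{A} \to \im(F)$ is surjective on objects by the very definition of the objects of $\im(F)$, full by the definition of the arrows of $\im(F)$, and faithful by hypothesis — hence an equivalence. The only genuine obstacle in the whole argument is the well-definedness of the hom-sets of $\im(F)$, and this is precisely the reason that the full-on-objects hypothesis is needed: without it, the ``image'' of $F$ might fail to be a subcategory at all, as already noted in the discussion preceding the definition.
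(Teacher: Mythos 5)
Your proof is correct and follows essentially the same route as the paper, which simply observes that the only thing to check is the well-definedness of the subcategory $\im(F)$ and that this follows from fullness on objects; you have merely spelled out the details (the canonical isomorphisms $\iota_{a,a'}$ over identities, the independence of hom-sets from the choice of preimages, and closure under composition) that the paper leaves implicit.
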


\begin{proof}
The only thing to check is that the subcategory $\im(F)$ of $\cal B$ is well-defined. But this clearly follows from the condition of fullness on objects.
\end{proof}

\begin{remark}
The condition that $F$ be full on objects is, unlike that of reflecting equalities of objects, necessary for $F$ to be part of an equivalence of categories onto a subcategory of $\cal B$.
\end{remark}

\subsection{Regular categories}

A \emph{regular category} is a small category with finite limits in which images of arbitrary arrows exist and are stable under pullback. The image of an arrow is, by definition, the smallest subobject through which the arrow factors. A cover is a morphism whose image is the identical subobject. In a regular category, every arrow can be factored, in a unique way (up to a commuting isomorphism), as a cover followed by a monomorphism, and such factorisations are stable under pullback.

On a regular category $\cal C$ one can define a Grothendieck topology $J^{\textrm{reg}}_{{\cal C}}$ by stipulating that a sieve is $J^{\textrm{reg}}_{{\cal C}}$-covering if and only if it contains a cover. This topology is subcanonical, in other words $\cal C$ faithfully embeds into the associated topos $\Sh({\cal C}, J^{\textrm{reg}}_{{\cal C}})$. 

The natural notion of functor to consider between regular categories is that of regular functor: a functor between regular categories is said to be \emph{regular} if it preserves finite limits and covers. 

A regular category is said to be \emph{effective} (or \emph{Barr-exact}, cf. \cite{Barr}) if quotients by equivalence relations exist in it (and every equivalence relation is the kernel pair of such a quotient).

Every abelian category is effective regular. This fact plays an essential role in this paper.

\subsection{The exact completion of a regular category}\label{sec:exactcompletion}

Any regular category $\cal C$ can be fully faithfully embedded into an effective regular category ${\cal C}^{\textrm{eff}}$, called its \emph{effectivization}, characterized by the universal property that any regular functor from $\cal C$ to an effective regular category $\cal D$ extends uniquely to a regular functor ${\cal C}^{\textrm{eff}}\to {\cal D}$ (preserving coequalizers of equivalence relations).

By Remark D3.3.10 \cite{El}, we have an equivalence of toposes
\[
\Sh({\cal C}, J_{{\cal C}}^{\textrm{reg}})\simeq \Sh({\cal C}^{\textrm{eff}}, J_{{\cal C}^{\textrm{eff}}}^{\textrm{reg}}), 
\]
and the category ${\cal C}^{\textrm{eff}}$ can be recovered up to equivalence from the topos $\Sh({\cal C}, J_{{\cal C}}^{\textrm{reg}})$ as the full subcategory on its \emph{supercoherent objects}. A supercoherent object of a Grothendieck topos $\cal E$ is an object $A$ which is supercompact (in the sense that any covering of it in $\cal E$ contains a cover) and such that the domain of the kernel pair of any arrow to $A$ in $\cal E$ whose domain is a supercompact object is supercompact. 

As shown in \cite{Lack}, the effectivization ${\cal C}^{\textrm{eff}}$ of $\cal C$ is equivalent to the full subcategory of $\Sh({\cal C}^{\textrm{eff}}, J_{{\cal C}^{\textrm{eff}}}^{\textrm{reg}})$ on the objects which are coequalizers of equivalence relations in ${\cal C}$. 

The effectivization ${\cal C}^{\textrm{eff}}$ of a regular category $\cal C$ was explicitly described in \cite{CarboniVitale}, as follows. We shall denote by $SR$ the composition of two relations $R\mono X\times Y$ and $S\mono Y\times Z$, by $R^{o}\mono Y\times X$ the opposite of a relation $R\mono X\times Y$ and by $\leq$ the natural order relation between relations on the same pair of objects. The objects of ${\cal C}^{\textrm{eff}}$ are the pairs $(X, E)$, where $X$ is an object of $\cal C$ and $E$ is an equivalence relation on $X$, and the arrows $(X, E)\to (Y, F)$ are relations $R\mono X\times Y$ such that $RE=R=FR$ and $E\leq R^{o}R$ and $RR^{o}\leq F$. Let us denote by $q_{E}:X \to X\slash E$ and $q_{F}:Y \to Y\slash F$ the coequalizers of the equivalence relations $E$ and $F$ in ${\cal C}^{\textrm{eff}}$. Then, as observed in section 5 of \cite{Lack}, the arrow $\alpha_{R}:X\slash E \to Y\slash F$ induced by $R:(X, E) \to (Y, F)$ allows to reconstruct $R\mono X\times X$ by means of the following pullback square:
\[  
\xymatrix {
R \ar[d] \ar[r] &  Y  \ar[d]^{q_{F}} \\
X \ar[r]_{\alpha_{R} \circ q_{E}} & Y\slash F. } 
\] 

\begin{lemma}\label{lemma:conservativity}
Let $\cal C$ be a regular category and ${\cal C}^{\textrm{eff}}$ its effectivization. Let $F:{\cal C}\to {\cal D}$ be a regular functor to an effective regular category $\cal D$ and $\tilde{F}:{\cal C}^{\textrm{eff}} \to {\cal D}$ its extension to ${\cal C}^{\textrm{eff}}$. If $F$ is conservative then $\tilde{F}$ is conservative as well.
\end{lemma}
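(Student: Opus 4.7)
My strategy is to characterise isomorphisms $\alpha_R:(X,E)\to(Y,F)$ in ${\cal C}^{\textrm{eff}}$ by two conditions which are expressible inside the regular category $\cal C$, and then to transfer these conditions back from $\cal D$ to $\cal C$ using the conservativity of $F$. To set up the reflection machinery I would first record three standard facts about an arbitrary conservative regular functor $G:{\cal A}\to{\cal B}$ between regular categories: (i) $G$ reflects monomorphisms, since $G$ preserves kernel pairs and $f$ is monic iff its diagonal into its kernel pair is an isomorphism; (ii) $G$ reflects covers, since if $f=m\circ c$ is a cover-mono factorisation in $\cal A$ then $G(m)\circ G(c)$ is one for $G(f)$, so $G(f)$ being a cover forces $G(m)$, and hence $m$, to be an isomorphism; (iii) as a consequence $G$ reflects the order on subobjects, since $A\leq B$ over $X$ amounts to the inclusion $A\cap B\to A$ being an iso, and meets of subobjects are pullbacks, which $G$ preserves. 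Since ${\cal C}^{\textrm{eff}}$ is regular, an arrow that is both a monomorphism and a cover is necessarily an isomorphism; so the task reduces to showing that $\tilde F(\alpha_R)$ being an isomorphism forces $\alpha_R$ to be both monic and a cover.

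I would then use the Carboni-Vitale pullback recalled in the excerpt to translate each of these two conditions into a statement in $\cal C$. The projection $\pi_Y:R\to Y$ is the pullback of $\alpha_R\circ q_E$ along the cover $q_F$, so by the stability of covers under pullback together with the cancellation property ($hg$ a cover implies $h$ a cover), $\alpha_R$ is a cover in ${\cal C}^{\textrm{eff}}$ if and only if $\pi_Y$ is a cover in $\cal C$. A short calculation in the regular internal logic identifies the kernel pair of $\alpha_R\circ q_E:X\to Y/F$ with the relational composite $RFR^{o}\mono X\times X$, which lies in $\cal C$; comparing with the kernel pair $E$ of $q_E$, one sees that $\alpha_R$ is monic in ${\cal C}^{\textrm{eff}}$ if and only if $RFR^{o}\leq E$ in $\Sub(X\times X)$ computed in $\cal C$. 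Because $\tilde F$ is regular and extends $F$, it preserves relational composition, images and meets, so the hypothesis that $\tilde F(\alpha_R)$ is an isomorphism translates exactly into the conditions that $F(\pi_Y)$ is a cover in $\cal D$ and $F(RFR^{o})\leq F(E)$ in $\Sub(F(X)\times F(X))$. Applying the reflection properties (i)-(iii) to the conservative regular functor $F$ itself yields the corresponding statements back in $\cal C$, so $\alpha_R$ is both a cover and monic, hence an isomorphism.

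The main technical obstacle I anticipate is the rigorous identification of the kernel pair of $\alpha_R\circ q_E$ with the relational composite $RFR^{o}$ in a general regular category, and more broadly the careful verification that composition of relations, images and meets of subobjects really are preserved by the regular functors $F$ and $\tilde F$. Once these preservation and reflection lemmas are in place, the argument reduces cleanly to the cover/mono decomposition of isomorphisms together with the reflection properties of $F$.
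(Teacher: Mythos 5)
Your argument is correct, but it takes a genuinely different route from the paper's. The paper first reduces conservativity of $\tilde{F}$ to the single statement that $\tilde{F}$ reflects invertibility of \emph{monomorphisms} (this condition already forces $\tilde{F}$ to be faithful, hence to reflect monos, and the cover--mono factorisation then handles an arbitrary arrow), and then disposes of monomorphisms in one stroke by invoking Lemma 3.1 of \cite{Lack}: $\cal C$ is closed under subobjects in ${\cal C}^{\textrm{eff}}$, so pulling a monomorphism $m$ with codomain $X\slash E$ back along the cover $q_{E}$ yields a subobject $n$ of $X$ lying in $\cal C$, with $m$ invertible iff $n$ is, and conservativity of $F$ on $\cal C$ finishes the job. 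You never reduce to monomorphisms: you split the iso condition on $\alpha_{R}$ into ``cover'' plus ``mono'', translate each half into a statement internal to $\cal C$ --- via the Carboni--Vitale pullback square for the cover half and a kernel-pair computation for the mono half --- and reflect both along $F$. What your route buys is independence from Lack's closure-under-subobjects lemma and a completely explicit criterion for invertibility of $\alpha_{R}$ in terms of $R$, $E$ and the relation on $Y$; what it costs is the extra verifications you yourself flag (that regular functors preserve relational composition, images and meets, and the identification of the kernel pair of $\alpha_{R}\circ q_{E}$), all of which do go through. Your reflection lemmas (i)--(iii) for conservative regular functors are correct and standard, and both directions of your cover criterion (stability of covers under pullback one way, right-cancellation of covers the other) check out. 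One notational caveat: with the paper's convention, under which $SR$ denotes ``first $R$, then $S$'', the kernel pair of $\alpha_{R}\circ q_{E}$ is the composite $R^{o}FR$ (equal to $R^{o}R$ since $FR=R$); your $RFR^{o}$ is the same composite written in diagrammatic order and is ill-typed under the paper's convention, so it should be adjusted for consistency.
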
   

\begin{proof}
To prove that $\tilde{F}$ is conservative if $F$ is, we observe that it suffices to show that for any monomorphism $m$, if $\tilde{F}(m)$ is an isomorphism then $m$ is an isomorphism. Indeed, this condition implies that $\tilde{F}$ is faithful and hence that $\tilde{F}$ reflects monomorphisms; but any arrow in a regular category can be factored as a cover followed by a monomorphism and if it is sent by an exact faithful functor to an isomorphism the cover part of it is monic (since its image by the functor is monic, it being an isomorphism), equivalently an isomorphism.   

Now, by Lemma 3.1 \cite{Lack}, the category $\cal C$ is closed under subobjects in ${\cal C}^{\textrm{eff}}$. By pulling back a monomorphism $m$ with codomain an object $X\slash E$ of ${\cal C}^{\textrm{eff}}$ along the cover $q_{E}:X\to X\slash E$ we thus obtain a subobject $n$ of $X$ in $\cal C$, and clearly $m$ is an isomorphism if and only if $n$ is (since covers are stable under pullback). From this our claim follows immediately.
\end{proof}

\subsection{Regular logic}\label{sec:regularlogic}

A \emph{first-order signature} $\Sigma$ consists of a set of \emph{sorts} (to be interpreted as \emph{sets} or more generally as \emph{objects} of a category), \emph{function symbols} (to be interpreted as \emph{functions} or more generally as \emph{arrows} of a category) and \emph{relation symbols} (to be interpreted as \emph{subsets} or more generally as \emph{subobjects} in a category). Constants are treated as $0$-ary function symbols.

For each sort $A$, one disposes of an infinite stock of variables $x^{A}$ of type $A$, and one can apply function symbols to them, and so on for a finite number of times, to form \emph{terms} over $\Sigma$. For example, the theory of commutative rings with unit has one sort, binary function symbols $+$, $-$ and $\cdot$ formalizing the operations on the ring and constants $0$ and $1$. The syntactic expression $(x\cdot y)+z$ is a term over this signature.

An \emph{atomic formula} over $\Sigma$ is a formula of the form $R(t_{1}, \ldots, t_{n})$, where $R$ is a $n$-ary relation symbol over $\Sigma$ and $t_{1}, \ldots, t_{n}$ is a $n$-tuple of terms over $\Sigma$.     

A first-order theory over a signature $\Sigma$ is said to be \emph{regular} if its axioms are of the form $(\phi \vdash_{\vec{x}} \psi)$, where $\phi$ and $\psi$ are regular formulae over $\Sigma$, i.e. formulae obtained from atomic formulae by only using finite conjunctions and existential quantifications. 

The regular syntactic category ${\cal C}^{\textrm{reg}}_{\mathbb T}$ of a regular theory $\mathbb T$ over a signature $\Sigma$ is the category having as objects the regular formulae-in-context $\{\vec{x}. \phi\}$ over $\Sigma$ (these are considered up to `renaming' equivalence) and as arrows $\{\vec{x}. \phi\}\to \{\vec{y}. \psi\}$ the $\mathbb T$-provable equivalence classes of regular formulae $\theta(\vec{x}, \vec{y})$ which are $\mathbb T$-provably functional from $\{\vec{x}. \phi\}$ to $\{\vec{y}. \psi\}$, i.e. such that the sequents
\[
(\theta \vdash_{\vec{x}, \vec{y}} \phi \wedge \psi),\\
(\phi \vdash_{\vec{x}} (\exists \vec{y})\theta), \textrm{ and }
(\theta(\vec{x}, \vec{y}) \wedge \theta(\vec{x}, \vec{y'}) \vdash_{\vec{x}, \vec{y}, \vec{y'}} \vec{y}=\vec{y'})
\]
are provable in $\mathbb T$ (where we suppose without loss of generality the contexts $\vec{x}$ and $\vec{y}$ to be disjoint). Notice that every tuple of terms $(t_{1}(\vec{x}), \ldots, t_{m}(\vec{x}))$ such that the sequent
$(\phi \vdash_{\vec{x}} \psi(t_{1}(\vec{x}), \ldots, t_{m}(\vec{x})))$ is provable in $\mathbb T$ defines an arrow in the syntactic category from $\{\vec{x}. \phi\}$ to $\{\vec{y}. \psi\}$. Anyway, in general not all the arrows of ${\cal C}^{\textrm{reg}}_{\mathbb T}$ are of this form (take for example the first-order (regular) theory of categories: this theory has a ternary relation symbol $C$ formalizing composition of arrows and the arrow $\{f,g. \dom(g)=\cod(g)\}\to \{h. \top\}$ given by $[C(h,f,g)]$ is not provably equivalent to a term). It should be noted that the condition for a formula to be provably functional corresponds, semantically, to the requirement that it is the graph of a morphism from the interpretation of the formula in the domain to the interpretation of the formula in the codomain. If one wants ${\cal C}^{\textrm{reg}}_{\mathbb T}$ to be a regular category, these are the arrows that one has to take; terms do not suffice in general. Indeed, given a $\mathbb T$-provably functional formula $\theta(\vec{x}, \vec{y})$ from $\{\vec{x}. \phi\}$ to $\{\vec{y}. \psi\}$, the canonical projection arrow $[\vec{x'}=\vec{x}]:\{\vec{x}, \vec{y}. \theta\}\to \{\vec{x'}. \phi(\vec{x'}\slash \vec{x})\}$ is a cover (in a regular category existential quantifications are interpreted by taking images) and a monomorphism (by the second of the functionality axioms), whence it should be an isomorphism (as in a regular category every arrow which is both a cover and a monomorphism is an isomorphism). In particular, there should be an arrow $\{\vec{x'}. \phi(\vec{x'}\slash \vec{x})\}\to \{\vec{x}, \vec{y}. \theta\}$ which is inverse to $[\vec{x'}=\vec{x}]$, i.e. which is given by $[\theta(\vec{x}, \vec{y}) \wedge \vec{x'}=\vec{x}]$. 

\begin{theorem}\protect{\rm (\cite{MR} and \cite{El})}
The regular syntactic category ${\cal C}^{\textrm{reg}}_{\mathbb T}$ satisfies the following universal property: it is a regular category and for any regular category $\cal D$, there is an equivalence of categories
\[
{\mathbb T}\textrm{-mod}({\cal D})\simeq \textbf{Reg}({\cal C}^{\textrm{reg}}_{\mathbb T}, {\cal D}),
\]
natural in $\cal D$, where ${\mathbb T}\textrm{-mod}({\cal D})$ denotes the category of $\mathbb T$-models in $\cal D$ and structure homomorphisms between them and $\textbf{Reg}({\cal C}^{\textrm{reg}}_{\mathbb T}, {\cal D})$ denotes the category of regular functors ${\cal C}^{\textrm{reg}}_{\mathbb T} \to {\cal D}$ and natural transformations between them. 
\end{theorem}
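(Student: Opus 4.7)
The plan is to prove the two claims separately: first, that $\mathcal{C}^{\textrm{reg}}_{\mathbb{T}}$ is a regular category, and second, that evaluation at a canonical universal model induces the required equivalence. For regularity, I would explicitly exhibit the finite limits and the image factorisations in syntactic terms. The terminal object is the formula-in-context $\{[]. \top\}$; the product of $\{\vec{x}.\phi\}$ and $\{\vec{y}.\psi\}$ (with disjoint contexts) is $\{\vec{x},\vec{y}.\phi\wedge\psi\}$ with projections given by the obvious provably functional formulas $[\phi\wedge\psi\wedge\vec{x'}=\vec{x}]$ and $[\phi\wedge\psi\wedge\vec{y'}=\vec{y}]$; and the equaliser of two parallel arrows $[\theta_{1}],[\theta_{2}]:\{\vec{x}.\phi\}\to\{\vec{y}.\psi\}$ is cut out by the subobject $\{\vec{x}.\phi\wedge(\exists\vec{y})(\theta_{1}\wedge\theta_{2})\}$. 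The image of an arrow $[\theta]:\{\vec{x}.\phi\}\to\{\vec{y}.\psi\}$ is the subobject $\{\vec{y}.(\exists\vec{x})\theta\}\hookrightarrow\{\vec{y}.\psi\}$; the cover part is $[\theta]$ itself, viewed as an arrow onto this image. Stability of images under pullback reduces to the sound rule of regular logic expressing the commutation of existential quantification with substitution.

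Next I would construct a tautological universal $\mathbb{T}$-model $M_{\mathbb{T}}$ in $\mathcal{C}^{\textrm{reg}}_{\mathbb{T}}$. Each sort $A$ is interpreted as $\{x^{A}.\top\}$; each function symbol $f$ of arity $(A_{1},\ldots,A_{n})\to B$ is interpreted by its graph, i.e.\ by the arrow $[y=f(\vec{x})]$; each relation symbol $R$ is interpreted by the subobject $\{\vec{x}.R(\vec{x})\}\hookrightarrow\{\vec{x}.\top\}$. By induction on the construction of regular formulas, one checks that $M_{\mathbb{T}}$ interprets $\{\vec{x}.\phi\}$ as $\{\vec{x}.\phi\}$ itself, and that each axiom of $\mathbb{T}$ is satisfied in $M_{\mathbb{T}}$ precisely because the corresponding sequent is, by hypothesis, $\mathbb{T}$-provable. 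This model then yields a functor
\[
\textbf{Reg}(\mathcal{C}^{\textrm{reg}}_{\mathbb{T}},\mathcal{D})\longrightarrow \mathbb{T}\textrm{-mod}(\mathcal{D}),\qquad F\longmapsto F\circ M_{\mathbb{T}},
\]
natural in $\mathcal{D}$, since regular functors preserve the finite limits and image factorisations used to interpret the connectives and quantifiers of regular logic.

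To produce a quasi-inverse, given a $\mathbb{T}$-model $N$ in $\mathcal{D}$ I would send $\{\vec{x}.\phi\}$ to its Tarskian interpretation $[\![\vec{x}.\phi]\!]_{N}$ in $\mathcal{D}$, which is well-defined as a subobject of the appropriate finite product of sorts because $\mathcal{D}$ is regular. A provably functional formula $\theta(\vec{x},\vec{y})$ is interpreted as a subobject of $[\![\vec{x}.\phi]\!]_{N}\times[\![\vec{y}.\psi]\!]_{N}$ which, thanks to the two functionality sequents being valid in $N$, is the graph of a uniquely determined arrow $[\![\vec{x}.\phi]\!]_{N}\to[\![\vec{y}.\psi]\!]_{N}$. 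Functoriality follows from the fact that composition of arrows in the syntactic category is defined by $\mathbb{T}$-provable existential composition of relations, which corresponds exactly to composition of graphs in $\mathcal{D}$; regularity follows from the compatibility of the Tarskian interpretation with finite limits and image factorisations. That the two assignments are mutually quasi-inverse up to natural isomorphism amounts to observing that $F\mapsto F\circ M_{\mathbb{T}}$ recovers $F$ on every $\{\vec{x}.\phi\}$ because $F$ preserves the regular operations by which $\{\vec{x}.\phi\}$ is built out of sorts, function and relation symbols, and conversely that the interpretation functor associated with $M_{\mathbb{T}}$ is the identity.

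The main technical obstacle is the very first claim: checking that the proposed finite limits and image factorisations genuinely satisfy their universal properties, and in particular the stability of images under pullback, requires a careful book-keeping of substitutions and uses of the provability rules for conjunction and existential quantification. Once this syntactic calculus is in place, the equivalence in the second part is essentially forced by the Yoneda-type observation that a regular functor out of $\mathcal{C}^{\textrm{reg}}_{\mathbb{T}}$ is completely determined, up to unique isomorphism, by its restriction to the sorts, function symbols and relation symbols, i.e.\ by a $\mathbb{T}$-model.
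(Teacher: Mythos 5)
The paper does not prove this theorem itself but cites it as a classical result from Makkai--Reyes and Johnstone's \emph{Elephant} (D1.4), and your sketch is precisely the standard argument given there: exhibit finite limits and pullback-stable image factorisations syntactically, construct the tautological universal model $M_{\mathbb T}$, and show that evaluation at $M_{\mathbb T}$ and the Tarskian interpretation are mutually quasi-inverse. Your outline is correct (including the formulas for products, equalisers and images), the identified technical burden --- verifying the universal properties and pullback-stability via the proof rules of regular logic --- is indeed where the real work lies, and the only point left implicit is the correspondence on morphisms between $\mathbb T$-model homomorphisms and natural transformations, which is routine.
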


One half of the equivalence of the theorem sends a model $M$ to the functor $F_{M}$ sending $\{\vec{x}. \phi\}$ to its interpretation $[[\vec{x}. \phi]]_{M}$ in $M$ and acting accordingly on the arrows. In particular, for any $\mathbb T$-provably functional formula $\theta(\vec{x}, \vec{y}):\{\vec{x}. \phi\}\to \{\vec{y}. \psi\}$ and any $\mathbb T$-model homomorphism $f:M\to N$, the following diagram is commutative:

\[  
\xymatrix {
[[\vec{x}. \phi]]_{M} \ar[d]^{f} \ar[r]^{[[\theta]]_{M}} &  [[\vec{y}. \psi]]_{M} \ar[d]^{f}  \\
[[\vec{x}. \phi]]_{N} \ar[r]^{[[\theta]]_{N}}  & [[\vec{y}. \psi]]_{N}.}
\] 

For more details, see section D1.4 \cite{El} or \cite{OC}. 

A model $M$ of a regular theory $\mathbb T$ is said to be \emph{conservative} if every regular sequent over the signature of $\mathbb T$ which is valid in $M$ is provable in $\mathbb T$. This terminology is justified by the fact that, for any $M$, $F_{M}$ is conservative (as a functor) if and only if $M$ is conservative (as a $\mathbb T$-model). Notice that, since ${\cal C}^{\textrm{reg}}_{\mathbb T}$ has equalizers and $F_{M}$ preserves them, if $M$ is conservative $F_{M}$ is also faithful. 

The classifying topos of a regular theory $\mathbb T$ can be constructed as the topos of sheaves $\Sh({\cal C}^{\textrm{reg}}_{\mathbb T}, J_{\mathbb T}^{\textrm{reg}})$ on the regular syntactic category of $\mathbb T$ with respect to the regular topology on it, or as the topos of sheaves on the effectivization of ${\cal C}^{\textrm{reg}}_{\mathbb T}$ with respect to the regular topology on it. 

The effectivization of ${\cal C}^{\textrm{reg}}_{\mathbb T}$ can be recovered from the classifying topos ${\cal E}_{\mathbb T}$ of $\mathbb T$ as the full subcategory on the supercoherent objects. As observed above, it can also be characterized as the closure of ${\cal C}^{\textrm{reg}}_{\mathbb T}$ in ${\cal E}_{\mathbb T}$ under quotients by equivalence relations.

\section{Nori motives}

Now that we have provided the necessary background, we can proceed with our logical analysis of Nori's construction.

\subsection{Review of Nori's construction}\label{sec:review}

We denote by $\Rmod$ the category of $R$-modules for a commutative ring with unit $R$, and by $ \Rmod_{\rm f}$ the full subcategory of $\Rmod$ on the finitely generated $R$-modules. If $R$ is a field $k$, we shall denote by $k\textrm{-vect}_{\textrm{f}}$ the category of finite-dimensional vector spaces over $k$. Given a category with finite products $\cal A$, we shall denote by $\Rmod({\cal A})$ the category of $R$-modules internal to $\cal A$, that is the category of models of the algebraic theory of $R$-modules in the category $\cal A$. 

Let $D$ be a \emph{diagram} (i.e., an oriented graph) and let $T:D \to \Rmod$ be a \emph{representation} of $D$ into $R$-modules (i.e., a map sending each vertex $d$ of $D$ to an abelian group $T(d)$ in $\Rmod$ and each edge $f:c\to d$ of $D$ to a $R$-linear homomorphism $T(c)\to T(d)$). 

Given a representation $T:D \to \Rmod$, we denote by $\End(T)$ the set of endomorphisms of $T$, meaning functions $\alpha$ which assign to each vertex $d$ of $D$ an $R$-linear homomorphism $\alpha(d):T(d)\to T(d)$ in such a way that for every edge $f:c\to d$ of $D$, the following diagram commutes:
\[  
\xymatrix {
T(c) \ar[d]^{T(f)} \ar[r]^{\alpha(c)} & T(c) \ar[d]^{T(f)} \\
T(d)  \ar[r]^{\alpha(d)} & T(d). } 
\] 

For a diagram $D$ and a representation $T:D \to \Rmod_{\rm f}$, we denote by $\End(T)\textrm{-mod}_{\textrm{fin}}$ the category of $\End(T)$-modules which are finitely generated as $R$-modules and homomorphisms between them.

Recall the following key result due to Nori (proofs of it are given in \cite{Brug} and \cite{NM}).

\begin{theorem}\protect{\rm(Nori)}\label{thm:Nori}
Let $D$ be a diagram and $T:D\to \Rmod_{\rm f}$ a representation of $D$, where $R$ is a Noetherian ring. Then there exists an $R$-linear abelian category ${\cal C}_{T}$, a representation $\tilde{T}:D \to {\cal C}_{T}$ and an exact faithful functor $F_{T}:{\cal C}_{T}\to \Rmod_{\rm f}$ such that $T=F_{T}\circ \tilde{T}$ and this factorisation is universal in the sense that for any factorisation $T=F\circ S$, where $F:{\cal A}\to \Rmod_{\rm f}$ is an exact and faithful functor defined on an $R$-linear abelian category $\cal A$ and $S:D \to {\cal A}$ is a representation of $D$ in $\cal A$, there exists a unique, up to isomorphism, exact (and faithful) functor $\xi:{\cal C}_{T} \to {\cal A}$ such that the following diagram commutes (up to isomorphism):

\begin{center}
\begin{tikzpicture}
\node (1) at (2,0) {${\cal C}_{T}$};
\node (4) at (0,-3) {$D$};
\node (5) at (4,-3) {$\Rmod_{\rm f}$};
\node (6) at (2,-1.5) {$\cal A$};
\draw[->](4) to node [above]{$T$} (5);
\draw[->](4) to node [above]{$S$} (6);
\draw[->](6) to node [above]{$F$} (5);
\draw[->](1) to node [right]{$F_{T}$} (5);
\draw[dashed, ->](1) to node [right]{$\xi$} (6);
\draw[->](4) to node [left] {$\tilde{T}$} (1);
\end{tikzpicture}
\end{center}

\end{theorem}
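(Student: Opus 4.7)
The plan is to encode $T$ as a conservative model of a suitable regular theory and then apply the categorical-logic machinery of Section~\ref{sec:preliminaries}. First I would introduce a regular theory $\mathbb{T}_T$ whose signature has a sort $A_d$ for each vertex $d\in D$, a unary function symbol $\bar{f}:A_c\to A_d$ for each edge $f:c\to d$ of $D$, a binary function symbol $+_d:A_d\times A_d\to A_d$ together with unary function symbols $\lambda_r^d:A_d\to A_d$ (one for each $r\in R$), and constants $0_d$ of sort $A_d$. Its axioms would be (a) the equational sequents asserting that each $A_d$ is an $R$-module and that each $\bar{f}$ respects this structure, and (b) every regular sequent over the signature which is valid in $T$ under the obvious interpretation $A_d\mapsto T(d)$, $\bar{f}\mapsto T(f)$. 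By construction $T$ is a conservative model of $\mathbb{T}_T$ in $\Rmod_{\rm f}$.

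Next I would define ${\cal C}_T := ({\cal C}^{\textrm{reg}}_{\mathbb{T}_T})^{\textrm{eff}}$. The function symbols of group (a) equip each representable object $\{x^{A_d}.\top\}$ with an internal $R$-module structure, and this $R$-linearity propagates through finite limits to the whole syntactic category and thus to its effectivization. Combined with effective regularity this yields abelianness: in an additive effective regular category, every kernel pair $\{(x,y):f(x)=f(y)\}$ is an equivalence relation whose effective quotient realises the image of $f$ as the kernel of its cokernel. Setting $\tilde{T}(d):=\{x^{A_d}.\top\}$ and $\tilde{T}(f):=[\bar{f}(x)=y]$ produces a representation $\tilde{T}:D\to{\cal C}_T$. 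The model $T$ corresponds, via the universal property of the syntactic category followed by that of the effectivization recalled in Section~\ref{sec:exactcompletion}, to an exact $R$-linear functor $F_T:{\cal C}_T\to \Rmod_{\rm f}$ with $F_T\circ\tilde{T}=T$ strictly. Faithfulness of $F_T$ follows because the conservativity of $T$ as a model makes the corresponding functor from ${\cal C}^{\textrm{reg}}_{\mathbb{T}_T}$ conservative, Lemma~\ref{lemma:conservativity} transfers conservativity to the effectivization, and an exact conservative functor between abelian categories is automatically faithful.

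For the universal property, given any factorisation $T=F\circ S$ with $F:{\cal A}\to\Rmod_{\rm f}$ exact and faithful and $\cal A$ abelian $R$-linear, I would equip $\cal A$ with a $\mathbb{T}_T$-model structure by assigning $S(d)$ to the sort $A_d$ and using the canonical $R$-linear operations on objects of $\cal A$. Axioms of group (a) are valid by construction; those of group (b) are inherited from $T$ because an exact faithful functor between abelian categories reflects containments of subobjects (as the pullback of a subobject along an exact faithful functor is itself up to the conservativity of $F$), and a regular sequent is nothing but such a containment after interpretation. This model produces an exact $R$-linear functor $\xi:{\cal C}_T\to{\cal A}$ by the universal properties involved. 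The composite $F\circ\xi$ agrees with $F_T$ on the generating objects $\tilde{T}(d)$ and both are exact, so, since ${\cal C}_T$ is generated by the $\tilde{T}(d)$ under finite limits and quotients by equivalence relations, one obtains a natural isomorphism $F\circ\xi\simeq F_T$. Uniqueness of $\xi$ up to isomorphism follows from the uniqueness clauses of the two universal properties.

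The hardest step will be to confirm that the effectivization of the $R$-linear regular syntactic category is actually abelian, i.e.\ that every monomorphism is a kernel; this rests on a careful interplay between internal $R$-linearity (which lets one rewrite a kernel pair of $f$ as the fibre over $0$ of the difference map $f\circ p_1-f\circ p_2$, hence automatically as an equivalence relation with a neutral section) and Barr-exactness (which supplies the required quotient, yielding a cokernel whose kernel is the original subobject). A secondary subtlety is to verify that all the properties one must transfer between $T$, $S$ and the syntactic categories — $R$-linearity of morphisms, well-definedness of the arrows $[\bar{f}(x)=y]$, and the reflection of the axioms of group (b) from $\Rmod_{\rm f}$ to $\cal A$ — are expressible by genuine regular, and not merely geometric, sequents, so that they fit within the regular-logic framework of Section~\ref{sec:regularlogic}.
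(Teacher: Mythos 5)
Your proposal is essentially correct, but it takes a genuinely different route from the one the paper attaches to this particular statement. The paper treats Theorem \ref{thm:Nori} as a cited result (with proofs in \cite{Brug} and \cite{NM}) and recalls Nori's own construction: for a \emph{finite} diagram one takes ${\cal C}_{T}\simeq \End(T)\textrm{-mod}_{\rm fin}$, exploiting that $\End(T)$ is then a finite-type $R$-algebra, and for an infinite diagram one takes the filtered colimit of these categories over all finite full subdiagrams. What you propose instead is precisely the paper's \emph{new} construction, i.e.\ the content and proof of its Theorem \ref{thm:main}: form the regular theory $\Th(T)$ of $T$ over the signature $L_{D}$, take the effectivization of its regular syntactic category, prove additivity and $R$-linearity by propagating the internal $R$-module structure, deduce abelianness from additivity plus effectivity, and obtain $F_{T}$ and the factorising functors from conservativity of $T$ as a model together with Lemma \ref{lemma:conservativity}. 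Nori's route buys a concrete module-theoretic description but depends on finiteness at every step; your route works for arbitrary rings and non-finitely-generated modules and yields a \emph{strict} factorisation with strictly unique $\xi$, which is stronger than the up-to-isomorphism statement being proved (cf.\ Remark \ref{rem:generalization}(d)). Two points deserve to be made explicit. First, the statement requires $F_{T}$ to land in $\Rmod_{\rm f}$; this is where Noetherianity enters (interpretations of regular formulae are subobjects of finite products of the $T(d)$, and $\Rmod_{\rm f}$ is closed in $\Rmod$ under finite limits and coequalizers of equivalence relations only when $R$ is Noetherian), and your proposal never says where the hypothesis is used. Second, deducing uniqueness of $\xi$ \emph{up to isomorphism among functors making the triangle commute up to isomorphism} from strict uniqueness is not automatic: one needs the full naturality of the equivalence ${\mathbb T}\textrm{-mod}({\cal A})\simeq \textbf{Reg}({\cal C}^{\textrm{reg}}_{{\mathbb T}_{T}},{\cal A})$, so that any $\xi'$ with $\xi'\circ\tilde{T}\cong S$ corresponds to a model isomorphic to $S$ and is therefore isomorphic to $\xi$; your appeal to ``the uniqueness clauses'' glosses over this.
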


\begin{remark}\label{rem:isomorphisms}
A clarification on the statement of the theorem is in order (see \cite{Brug}). The uniqueness up to isomorphism of the functor $\xi$ means the following: there are isomorphisms $\alpha:\xi\circ \tilde{T}\longby{\sim} S$ and $\beta:F\circ \xi\longby{\sim} F_{T}$ such that $F(\alpha)=\beta \tilde{T}$, and if $(\xi', \alpha', \beta')$ is another solution to the factorisation problem, there exists an isomorphism $\gamma:\xi'\longby{\sim} \xi$ such that $\alpha'=(\gamma \tilde{T})\alpha$ and $\beta'=\beta(F \gamma)$. 

\end{remark}
 
Nori's point of view in constructing ``mixed motives'' consists in thinking of an ``homology theory'' as a representation $T$ of Nori's diagram described in the introduction. In particular, $T$ can be provided by singular homology, and Nori's category of ``effective homological motives'' is given by ${\cal C}_{T}$ for this particular representation (see \cite{LV}, \cite{NN} and \cite{HMS}). 
 
The construction $\cC_T$ satisfies the following key properties, which in fact suffice to derive the universal property of Theorem \ref{thm:Nori}:
\begin{itemize}
\item If $D$ is a finite diagram then ${\cal C}_{T}\simeq \End(T)\text{-{\rm mod}}_{\rm fin}$;

\item if we have a map of diagrams $\iota: D^\prime \to D$ such that $T^\prime = T\circ \iota$, there is a canonical functor $$\iota_*:\cC_{T^\prime}\to \cC_T$$ making the diagram
$$\xymatrix{D^\prime\ar[r]^-{\iota}  \ar[d]^-{\tilde T^\prime} \ar@/_4.5pc/[ddr]_-{T^\prime} & D\ar[d]_-{\tilde T}\ar@/^2.5pc/[dd]^-{T}\\
\cC_{T^\prime}\ar[dr]_-{F_{T^\prime}} \ar@{.>}[r]^{\iota_*} & \cC_T\ar[d]^-{F_{T}}\\
&\Rmod_{\rm f} }$$
commutative (up to isomorphism);
\item  if $\cA$ is an abelian $R$-linear category and $T : \cA \to \Rmod_{\rm f}$ is a faithful, exact $R$-linear functor then $\tilde{T}: \cA \by{\simeq}\cC_T$ is an equivalence.
\end{itemize}

In fact, for {\it finite} (full) subdiagrams $\iota : F^\prime\into F$ of $D$ we have $$\iota_* : \cC_{T|_{F^\prime}} = \End (T|_{F^\prime})\text{-{\rm mod}}_{\rm fin}\to \cC_{T|_{F}}= \End (T|_{F})\text{-{\rm mod}}_{\rm fin}$$ so we can define
$$\cC_T= \colim{F \subseteq D \textrm{ finite}}\End (T|_{F})\text{-{\rm mod}}_{\rm fin}$$
taking the (filtered) colimit over all such subdiagrams. Furthermore, the diagram
$$\xymatrix{D\ar[r]^-{S}  \ar[d]^-{\tilde T} \ar@/_4.5pc/[ddr]_-{T} & \cA\ar@/^2.5pc/[dd]^-{F}\\
\cC_T\ar@/_0.5pc/[ur]^-{\xi}\ar[dr]_-{F_{T}} \ar@{.>}[r] _-{S_*} & \cC_{F}\ar@{.>}[u]_-{}\ar[d]^-{F_{F}}\\
&\Rmod_{\rm f} }$$
commutes, where $\xi$ is defined as the composition of the dotted arrows.

If $R$ is a field $k$, there is a more compact description of Nori's category which enlightens its relationship with the Tannakian formalism, we need to recall the definition of the coalgebra $\End^{\vee}(T)$ of endomorphisms of a representation $T:D\to k\textrm{-vect}_{\textrm{f}}$.

The algebra $\End(T)$ of endomorphisms of the functor $T$ can be identified with the kernel of the map 
\[
S:\mathbin{\mathop{\textrm{ $\prod$}}\limits_{d\in D}} \Hom_{k}(T(d), T(d)) \to \mathbin{\mathop{\textrm{ $\prod$}}\limits_{f:d'\to d'' \textrm{ in } D}} \Hom_{k}(T(d'), T(d''))
\]   
sending an element $<\alpha_{d}:T(d)\to T(d) \mid d\in D>$ of $\mathbin{\mathop{\textrm{ $\prod$}}\limits_{d\in D}} \Hom_{k}(T(d), T(d))$ to the element $<T(f)\circ \alpha_{d'}-\alpha_{d''}\circ T(f) \mid f:d'\to d'' \textrm{ in } D>$ of $\mathbin{\mathop{\textrm{ $\prod$}}\limits_{f:d'\to d'' \textrm{ in } D}} \Hom_{k}(T(d'), T(d''))$.

The coalgebra $\End^{\vee}(T)$ is defined in such a way that   $\End^{\vee}(T)^{\ast}\cong \End(T)$: it is set equal to the cokernel of the map
\[
\mathbin{\mathop{\textrm{ $\bigoplus$}}\limits_{f:d'\to d'' \textrm{ in } D}} \Hom_{k}(T(d'), T(d''))^{\ast} \to \mathbin{\mathop{\textrm{ $\bigoplus$}}\limits_{d\in D}} \Hom_{k}(T(d), T(d))^{\ast}
\]
defined in such a way that its dual is the map $S$ defined above. For more details about this construction we refer the reader to \cite{JS} and \cite{Arapura}.

It is important to note that $\End^{\vee}(T)\ncong \End(T)^{\ast}$ in general, although this is clearly true if $D$ is finite. 

As observed in \cite{NM} (see also \cite{Arapura}), we have canonical equivalences
\[
\End(T|_{F})\textrm{-mod}_{\textrm{fin}}\simeq \textrm{Comod}_{\textrm{fin}}(\End^{\vee}(T|_{F}))) 
\]
and
\[
\textrm{Comod}_{\textrm{fin}}(\End^{\vee}(T))=\colim{F\subseteq D \textrm{ finite }}\textrm{Comod}_{\textrm{fin}}(\End^{\vee}(T|_{F})),
\]
yielding an alternative, elegant, description of Nori's category of $T$ as the category $\textrm{Comod}_{\textrm{fin}}(\End^{\vee}(T)$ of finite-dimensional comodules over the coalgebra $\End^{\vee}(T)$. This relies on the fact that for any filtered colimit $A=\colim{i\in {\cal I}}A_{i}$ of coalgebras, $\textrm{Comod}_{\textrm{fin}}(A)=\colim{i\in {\cal I}}\textrm{Comod}_{\textrm{fin}}(A_{i})$ and on the fact that for any finite-dimensional coalgebra $A$ there is a canonical equivalence $$\textrm{Comod}_{\textrm{fin}}(A^{\vee})\simeq A\textrm{-mod}_{\textrm{fin}}$$ between the category of left $A^{\vee}$-comodules of finite dimension and that of left $A$-modules of finite dimension (cf. p. 73 of \cite{NM}); indeed, we clearly have $\End^{\vee}(T)=\mathbin{\mathop{\textrm{ $\bigcup$}}\limits_{F\subseteq D \textrm{ finite }}}\End^{\vee}(T|_{F})$.

\subsection{The main theorem}

Let $D$ be a diagram and $T:D\to \Rmod$ be a representation of $T$, where $R$ is a ring.

Let us define the regular theory ${\mathbb T}_{T}$ of $T$ as follows. The signature $L_{D}$ of ${\mathbb T}_{T}$ has one sort for each object $d$ of $D$, one function symbol for each arrow of $D$, and constants and function symbols formalizing the structure of left $R$-module on each sort $d$ (to formalize scalar multiplication we take, for each sort $c$, one unary function symbol of sort $c$ for each element of $R$). Notice that the language $L_{D}$ depends on the ring $R$ of coefficients but not on $T$. 

\begin{definition}
The theory ${\mathbb T}_{T}$ is the regular theory $\Th(T)$ of $T$ as a $L_{D}$-structure, that is the set of regular sequents over $L_{D}$ which are satisfied in $T$. 
\end{definition}

Notice in particular that the axioms of $R$-linearity of the interpretations of the arrows in $D$ are provable in ${\mathbb T}_{T}$ (since they are valid in $T$).

Let ${\cal C}_{{\mathbb T}_{T}}$ be the effectivization of the regular syntactic category of ${\mathbb T}_{T}$. We clearly have a representation $\tilde{T}:D\to {\cal C}_{{\mathbb T}_{T}}$ given by:
\[
d \leadsto  \{x^{d}. \top\}
\]
and 
\[
(f:d\to d') \leadsto  ([f]:\{x^{d}. \top\} \to \{x^{d'}. \top\}).\]

\begin{lemma}\label{lemma:additive}
The category ${\cal C}_{{\mathbb T}_{T}}$ is additive and $R$-linear. 
\end{lemma}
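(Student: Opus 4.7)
The plan is to reduce everything to a single key observation: since each arrow of $D$ is interpreted in $T$ as an $R$-linear map and the $R$-module operations on each $T(d)$ are themselves among the function symbols of $L_{D}$, every term $t(\vec{x})$ of $L_{D}$ depends $R$-linearly on $\vec{x}$ when interpreted in $T$. Consequently, every atomic formula $t(\vec{x})=s(\vec{x})$ cuts out an $R$-submodule (the kernel of $t-s$) of the relevant product $\prod_{i} T(d_{i})$, and since the class of $R$-submodules of a fixed product is closed under intersection and under image along projections, the interpretation $[[\vec{x}.\phi]]_{T}$ of any regular formula $\phi$ over $L_{D}$ is an $R$-submodule of $\prod_{i} T(d_{i})$. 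This submodule principle is really the only substantive point of the proof; everything else will be a formal consequence.

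From it I deduce that for every regular formula $\phi(\vec{x})$ the regular sequents asserting that $\phi$ contains $\vec{0}$ and is closed under coordinatewise addition and scalar multiplication by each $r\in R$ are valid in $T$, and hence belong to $\mathbb{T}_{T}=\operatorname{Th}(T)$. Via the corresponding terms this endows every object $\{\vec{x}.\phi\}$ of $\mathcal{C}^{\mathrm{reg}}_{\mathbb{T}_{T}}$ with a canonical internal $R$-module structure (zero morphism $\{[].\top\}\to\{\vec{x}.\phi\}$, addition $\{\vec{x},\vec{x'}.\phi(\vec{x})\wedge\phi(\vec{x'})\}\to\{\vec{x}.\phi\}$, and scalar multiplications). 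Applying the same submodule principle to a provably functional formula $\theta(\vec{x},\vec{y})\colon\{\vec{x}.\phi\}\to\{\vec{y}.\psi\}$, its interpretation $[[\theta]]_{T}$ is an $R$-submodule of $\prod_{i}T(d_{i})\times\prod_{j}T(d_{j}')$; being the graph of a total function between two submodules, it is necessarily the graph of an $R$-linear map, so the sequents expressing the $R$-linearity of $\theta$ also lie in $\mathbb{T}_{T}$, and every morphism of $\mathcal{C}^{\mathrm{reg}}_{\mathbb{T}_{T}}$ is a homomorphism for these canonical internal module structures.

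With this the additive $R$-linear structure is purely formal: hom-sets become $R$-modules via $f+g:=\mathrm{add}_{Y}\circ\langle f,g\rangle$ and analogous scalar multiplications (the axioms are inherited from the $R$-module object $Y$), while $R$-bilinearity of composition reduces to the $R$-linearity of every morphism just established. The terminal object $\{[].\top\}$ exists because $\mathcal{C}^{\mathrm{reg}}_{\mathbb{T}_{T}}$ is regular, and its endo-hom is simultaneously a singleton and an $R$-module, forcing $\mathrm{id}=0$ and making it a zero object; together with finite products this yields biproducts and hence additivity. To extend everything to $\mathcal{C}_{\mathbb{T}_{T}}$, I observe that any equivalence relation $E\rightarrowtail X\times X$ is, as an arrow of the $R$-linear category $\mathcal{C}^{\mathrm{reg}}_{\mathbb{T}_{T}}$, $R$-linear, hence a sub-$R$-module of $X\times X$, which is precisely the congruence condition needed for the module structure to descend to the quotient $X/E$; applying the same submodule reasoning to the relations $R\rightarrowtail X\times Y$ presenting morphisms in the Carboni--Vitale description (Section~\ref{sec:exactcompletion}) then yields the desired additive, $R$-linear structure on $\mathcal{C}_{\mathbb{T}_{T}}$ extending that on $\mathcal{C}^{\mathrm{reg}}_{\mathbb{T}_{T}}$.
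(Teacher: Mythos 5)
Your proof is correct and follows essentially the same route as the paper's: establish that every object of the syntactic category carries a canonical internal $R$-module structure and that every morphism is internally $R$-linear, deduce pre-additivity, and combine the zero object with finite products to get biproducts, finally extending to the effectivization. The only cosmetic differences are that you derive the submodule property semantically in $T$ and transfer it to ${\mathbb T}_{T}$ via ${\mathbb T}_{T}=\operatorname{Th}(T)$ (the paper instead internalizes the closure of $R$-modules under limits, images and quotients), and that you obtain initiality of the terminal object by the standard pre-additive argument where the paper uses an explicit weak-initiality-plus-equalizer argument.
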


\begin{proof}
Due to the definition of the theory ${\mathbb T}_{D}$, the objects $\{x^{d}. \top \}$ of ${\cal C}_{{\mathbb T}_{T}}$ have the structure of internal (left) $R$-modules in ${\cal C}_{{\mathbb T}_{T}}$ (by an internal $R$-module we mean a model of the theory of $R$-modules, axiomatized over a signature having a function symbol for each element of $R$, the constant $0$ and the addition symbol formalizing the group operation). These structures naturally induce a structure of internal $R$-module on each object of ${\cal C}_{{\mathbb T}_{T}}$, as follows. By definition of ${\cal C}_{{\mathbb T}_{T}}$, every object of ${\cal C}_{{\mathbb T}_{T}}$ is obtained by repeatedly applying finite limits, images and quotients to objects of the form $\{x^{d}. \top \}$ and arrows between them of the form $[t]:\{x^{d}. \top \} \to \{x^{d'}. \top \}$, where $t$ is a term over $L_{D}$. Now, finite limits of $R$-modules, images under $R$-linear maps between $R$-modules and quotients of $R$-modules by $R$-submodules are canonically endowed with the structure of an $R$-submodule; internalizing this remark to ${\cal C}_{{\mathbb T}_{T}}$ proves our claim. In particular, for any regular formula $\{\vec{x}. \phi\}$ over $L_{D}$, the structure of internal $R$-module on $\{\vec{x}. \phi\}$ is the restriction (in the sense of the commutativity of an obvious diagram) of the structure of internal $R$-module on $\{\vec{x}. \top\}$. This implies that any ${\mathbb T}_{T}$-provably functional formula $\{\vec{x}. \phi\}\to \{\vec{y}. \psi\}$ is internally a $R$-linear map of (internal) $R$-modules (since its graph is a $R$-submodule of $\{\vec{x}, \vec{y}. \phi \wedge \psi\}$  by this remark). This clearly extends to the objects and arrows of the effectivization ${\cal C}_{{\mathbb T}_{T}}$. It follows that ${\cal C}_{{\mathbb T}_{T}}$ is a pre-additive category, i.e. its hom-sets are endowed with the structure of an $R$-module and the composition maps are bi-linear. Indeed, given objects $a$ and $b$ of ${\cal C}_{{\mathbb T}_{T}}$, the operations of $R$-module on $b$ can be used to define the sum and $R$-scalar product for arrows from $a$ to $b$. In particular, ${\cal C}_{{\mathbb T}_{T}}$ is a $R$-linear category.

Next, we notice that ${\cal C}_{{\mathbb T}_{T}}$ has a zero object, that is the terminal object $\{[]. \top\}$ is also initial in ${\cal C}_{{\mathbb T}_{T}}$. The fact that it is weakly initial, i.e. that there is at least an arrow from it to any object in the effectivization of ${\cal C}_{{\mathbb T}_{T}}$, follows from the just remarked fact that every object of ${\cal C}_{{\mathbb T}_{T}}$ has the structure of an internal $R$-module (since the constant $0$ defines a provably functional formula from $\{[]. \top\}$ to it). It remains to show that any two arrows from $\{[]. \top\}$ to an object of ${\cal C}_{{\mathbb T}_{T}}$ are equal. To this end, consider their equalizer $m:E \mono \{[]. \top\}$. Since the object $\{[]. \top\}$ is weakly initial, there is an arrow $\alpha:\{[]. \top\} \to E$. On the other hand, $\{[]. \top\}$ being terminal, there is an arrow $\beta:E \to \{[]. \top\}$, whose composite with $\alpha$ is the identity, and which is equal to $m$. Therefore $\beta$ is split epic; since it is also monic, it is an isomorphism, as required.   

Since ${\cal C}_{{\mathbb T}_{T}}$ has finite limits, a zero object and is pre-additive, it has finite biproducts. Hence it is an additive category.
\end{proof}

\begin{theorem}\label{thm:main}
Let $R$ be a ring, $D$ a diagram and $T:D\to \Rmod$ a representation. Then the category ${\cal C}_{{\mathbb T}_{T}}$ is abelian and $R$-linear and, together with the representation $\tilde{T}$ and the functor $F_{T}$ defined above, satisfies the following universal property: $T=F_{T}\circ \tilde{T}$ and this factorisation is universal in the sense that for any factorisation $T=F\circ S$, where $F:{\cal A}\to \Rmod$ is an exact and faithful functor defined on an $R$-linear abelian category $\cal A$ and $S:D \to {\cal A}$ is a representation of $D$ in $\cal A$, there exists a unique exact (and faithful) functor $F_{S}:{\cal C}_{T} \to {\cal A}$ such that the following diagram commutes:

\begin{center}
\begin{tikzpicture}
\node (1) at (2,0) {${\cal C}_{T}$};
\node (4) at (0,-3) {$D$};
\node (5) at (4,-3) {$\Rmod$};
\node (6) at (2,-1.5) {$\cal A$};
\draw[->](4) to node [above]{$T$} (5);
\draw[->](4) to node [above]{$S$} (6);
\draw[->](6) to node [above]{$F$} (5);
\draw[->](1) to node [right]{$F_{T}$} (5);
\draw[dashed, ->](1) to node [right]{$F_{S}$} (6);
\draw[->](4) to node [left] {$\tilde{T}$} (1);
\end{tikzpicture}
\end{center}

If $R$ is Noetherian and $T$ takes values in $\Rmod_{\rm f}$ then $F_{T}$ takes values in $\Rmod_{\rm f}$ as well. 
\end{theorem}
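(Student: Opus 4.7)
First, by Lemma \ref{lemma:additive}, ${\cal C}_{{\mathbb T}_T}$ is additive and $R$-linear, and being the effectivization of a regular category it is also effective regular (section \ref{sec:exactcompletion}). It is a standard result that an additive effective regular category is abelian: kernels come from finite limits together with additivity, the cokernel of an arrow $f:A\to B$ arises as the quotient by the effective equivalence relation on $B$ whose classes are cosets of $\im(f)$, and effectivity forces every monomorphism to be the kernel of its cokernel. To build $F_T$, note that $T$ is by the definition of ${\mathbb T}_T=\Th(T)$ a model of ${\mathbb T}_T$ in the abelian, hence effective regular, category $\Rmod$. Combining the universal property of the regular syntactic category (section \ref{sec:regularlogic}) with that of the effectivization (section \ref{sec:exactcompletion}) we obtain a regular functor $F_T:{\cal C}_{{\mathbb T}_T}\to\Rmod$ sending $\{x^d.\top\}$ to $T(d)$ and $[f]$ to $T(f)$, and $F_T\circ\tilde{T}=T$ holds as a strict equality by construction.

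For the universal property, suppose $T=F\circ S$ with $F:{\cal A}\to\Rmod$ exact, faithful and $R$-linear and ${\cal A}$ abelian $R$-linear. The $R$-linear additive structure of ${\cal A}$ endows every object canonically with an internal $R$-module structure with respect to which every arrow is $R$-linear, so $S$ extends naturally to an $L_D$-structure in ${\cal A}$. The crux is to show that this structure is a model of ${\mathbb T}_T$. For any regular formula $\phi$ over $L_D$ the exactness of $F$ gives $F([[\phi]]_S)=[[\phi]]_T$; the faithfulness of the exact functor $F$ between abelian categories makes it conservative (as recalled in section \ref{sec:preliminaries}), hence it reflects subobject inclusions. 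It follows that any regular sequent valid in $T$ is already valid in $S$, so $\Th(T)\subseteq\Th(S)$ and $S$ is a ${\mathbb T}_T$-model. Applying once more the universal properties of sections \ref{sec:regularlogic} and \ref{sec:exactcompletion} we obtain a regular --- hence exact, as regular functors between abelian categories preserve both finite limits and epimorphisms --- functor $F_S:{\cal C}_{{\mathbb T}_T}\to{\cal A}$ with $F_S\circ\tilde{T}=S$. The composites $F\circ F_S$ and $F_T$ are both regular functors into $\Rmod$ associated to the ${\mathbb T}_T$-model $T$, so $F\circ F_S=F_T$ by the uniqueness clause of the universal properties. Since $T$ is a conservative model of its own theory, Lemma \ref{lemma:conservativity} implies that $F_T$ is conservative; an exact conservative functor between abelian categories is faithful, and $F_S$ then inherits faithfulness from $F_T=F\circ F_S$. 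Strict uniqueness of $F_S$ is forced because its action on the generating objects $\{x^d.\top\}$ and arrows $[f]$ is prescribed by the commutativity of the diagram, and its exactness then determines it on every object and arrow of ${\cal C}_{{\mathbb T}_T}$, all of which are built from these via finite limits, images and quotients of equivalence relations.

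The Noetherian refinement is immediate: over a Noetherian $R$, the operations of finite limits, images and quotients of equivalence relations preserve finite generation of $R$-modules, so $F_T$ has image in $\Rmod_{\rm f}$ whenever $T$ does. The main obstacle I expect lies in the inclusion $\Th(T)\subseteq\Th(S)$: it is the bridge between the hypotheses on $F$ and the syntactic universal property, and requires the joint use of exactness (to transport interpretations of regular formulas along $F$) and of conservativity (to reflect subobject inclusions from $\Rmod$ back into ${\cal A}$).
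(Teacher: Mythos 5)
Your proposal is correct and follows essentially the same route as the paper: Lemma \ref{lemma:additive} for additivity and $R$-linearity, the Tierney-type fact that an additive effective regular category is abelian, the universal properties of the syntactic category and its effectivization to produce $F_T$ and $F_S$, conservativity of $T$ as a model of its own theory (via Lemma \ref{lemma:conservativity}) for faithfulness, and exactness plus faithfulness of $F$ to transfer and reflect validity of regular sequents so that $S$ is a ${\mathbb T}_T$-model. The only cosmetic difference is that the paper first obtains $F_T$ as a functor to $\Set$ and then lifts it to $\Rmod$ by monadicity, whereas you construct it directly in $\Rmod$ as an effective regular category; both are valid.
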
 

\begin{proof}
From Lemma \ref{lemma:additive} we know that the category ${\cal C}_{{\mathbb T}_{T}}$ is additive and $R$-linear. By a general (unpublished) theorem of Tierney, whose proof can for instance be found in \cite{Barr}, every additive exact category is abelian. So ${\cal C}_{{\mathbb T}_{T}}$ is abelian. This can also be directly proved as follows. Since ${\cal C}_{{\mathbb T}_{T}}$ is additive and $R$-linear, to prove that it is abelian it remains to show that
\begin{enumerate}[(1)]
\item every morphism has a kernel and a cokernel, and
\item every monomorphism and every epimorphism is normal (that is, every monomorphism is a kernel of some morphism, and every epimorphism is a cokernel of some morphism). 
\end{enumerate}

(1) The fact that every morphism has a kernel follows from the fact that the kernel of a morphism $f$ can be described as the equalizer of $f$ and the zero arrow and that ${\cal C}_{{\mathbb T}_{T}}$ has all finite limits, in particular equalizers. The fact that every morphism $f$ has a cokernel follows from the fact that, ${\cal C}_{{\mathbb T}_{T}}$ being effective, quotients by equivalence relations exist, and the equivalence relation $R_{f}$ given (in the internal language) by: `$(x, y)\in R_{f}$ if and only if $x-y \in \im(f)$' is such that the quotient of the codomain of $f$ by $R_{f}$ is precisely the cokernel of $f$. 

(2) Let us prove that every monomorphism is the kernel of its cokernel pair. This follows from the fact that in every topos a monomorphism is the equalizer of its cokernel pair, in light of the fact that the embedding of ${\cal C}_{{\mathbb T}_{T}}$ into the classifying topos of ${\mathbb T}_{T}$ preserves finite limits and coequalizers of equivalence relations in ${\cal C}_{{\mathbb T}_{T}}$ (see section \ref{sec:preliminaries}). Indeed, as we observed above, all the arrows in ${\cal C}_{{\mathbb T}_{T}}$ are (internally) $R$-linear, whence the kernel pair of the cokernel pair of a monomorphism is isomorphic to the kernel pair of the coequalizer of the monomorphism and the zero arrow to its codomain.

It remains to prove that every epimorphism $f$ is the cokernel of its own kernel. Now, in every regular category a cover is the coequalizer of its kernel pair. By factoring $f$ as a cover $e$ followed by a monomorphism $m$, we obtain that $m$ is an epimorphism and hence, it being regular, an isomorphism. Therefore $f$ is a cover and hence the coequalizer of its kernel pair. Now, since $f$ is (internally) $R$-linear, the coequalizer of its kernel pair coincides with the cokernel of its kernel, whence $f$ is the cokernel of its kernel, as required.   

Since $T$ is a model in $\Set$ of the theory ${\mathbb T}_{T}$, the universal property of the regular syntactic category of ${\mathbb T}_{T}$ and of its effectivization yields an exact functor $F_{T}:{\cal C}_{{\mathbb T}_{T}} \to \Set$. The fact that $T$ is a conservative ${\mathbb T}_{T}$-model implies, by Lemma \ref{lemma:conservativity},  that $F_{T}$ is conservative and hence faithful. The functor $F_{T}$ actually takes values in $\Rmod$ and is exact with values in this category. Indeed, the category $\Rmod$ of $R$-modules being monadic over $\Set$, the forgetful functor $\Rmod\to \Set$ preserves and reflects finite limits and coequalizers of equivalence relations (cf. Proposition 3.5.2 \cite{borceux}). Notice that if $R$ is Noetherian the subcategory $\Rmod_{\textrm{f}}$ is closed in $\Rmod$ under finite limits and coequalizers of equivalence relations. Moreover, any regular formula $\{\vec{x}. \phi\}$ over $L_{D}$ is sent by $F_{T}$ to its interpretation $[[\vec{x}. \phi]]_{T}$ in the model $T$, which is a subobject of the interpretation of $\{\vec{x}. \top\}$ in $T$, namely of $Td_{1}\times \cdots \times Td_{n}$ (if $\vec{x}=(x_{1}^{d_{1}}, \ldots, x_{n}^{d_{n}})$), whence, if $R$ is Noetherian and $T$ takes values in $\Rmod_{\rm f}$, $[[\vec{x}. \phi]]_{T}$ lies in $\Rmod_{\rm f}$ as well.  

We have already proved that ${\cal C}_{{\mathbb T}_{T}}$ is an $R$-linear abelian category, that $\tilde{T}$ is a representation of $D$ in it and that $F_{T}$ is a faithful exact functor ${\cal C}_{{\mathbb T}_{T}} \to \Rmod$. We clearly have that $T=F_{T}\circ \tilde{T}$. Suppose that $T=F\circ S$ is a factorisation of $T$ through an $R$-linear abelian category $\cal A$ with a representation $S:D\to {\cal A}$ and a faithful exact functor $F:{\cal A}\to \Rmod$. The representation $S$ defines a model of the theory ${\mathbb T}_{T}$ inside the category $\cal A$; indeed, it is clearly a $L_{D}$-structure and it is a model of ${\mathbb T}_{T}$ since the functor $F$ is exact and faithful whence it reflects the validity of regular sequents over $L_{D}$. Since $\cal A$ is effective regular (it being abelian), the universal property of ${\cal C}_{{\mathbb T}_{T}}$ yields a unique exact functor $F_{S}:{\cal C}_{{\mathbb T}_{T}}\to {\cal A}$ sending the model $\tilde{T}$ to the model $S$, i.e. such that $F_{S}\circ \tilde{T}=S$. It remains to prove that $F\circ F_{S}=F_{T}$. Since both $F\circ F_{S}$ and $F_{T}$ are regular functors defined on ${\cal C}_{{\mathbb T}_{T}}$, to prove that they are equal is equivalent to show that the models of ${\mathbb T}_{T}$ corresponding to them are the same; now, the model of ${\mathbb T}_{T}$ corresponding to $F_{T}$ is $T$, while the one corresponding to $F\circ F_{S}$ is $F \circ S$; but $T=F\circ S$, as required. This completes the proof of the theorem.      
\end{proof}

\begin{remarks}\label{rem:generalization}
\begin{enumerate}[(a)]
\item The syntactic characterization of Nori's category provided by Theorem \ref{thm:main} yields a stronger universal property with respect to the one considered by Nori, directly arising from the universal property of the effectivization of the regular syntactic category of a regular theory recalled in section \ref{sec:preliminaries}:  for any effective regular category $\cal A$ with a representation $S:D\to \Rmod({\cal A})$ satisfying exactly the same properties (expressible in regular logic over $L_{D}$) as $T$, there exists a unique exact functor $F_{S}:{\cal C}_{{\mathbb T}_{T}} \to \Rmod({\cal A})$ such that $F_{S} \circ \tilde{T}=S$: 
\begin{center}
\begin{tikzpicture}
\node (1) at (3,0) {${\cal C}_{{\mathbb T}_{T}}$};
\node (6) at (3,-2) {$\Rmod({\cal A})$};
\node (2) at (0, -2) {$D$};
\draw[->](2) to node [above]{$\tilde{T}$} (1);
\draw[->](2) to node [above]{$S$} (6);
\draw[dashed, ->](1) to node [right]{$F_{S}$} (6);
\end{tikzpicture}
\end{center}

\item The methodology that we have exploited to build Nori categories is very general and can be adapted to construct other structures satisfying similar universal properties. For instance, the effectivization of the regular syntactic category of the theory over $L_{D}$ (taking $R={\mathbb Z}$) containing just the axioms formalizing the structure of abelian group on each sort $d$ satisfies the universal property of the free abelian category on the diagram $D$ (notice that any representation of a diagram in an abelian category $\cal A$ actually takes values in the category of abelian groups internal to $\cal A$).  

\item Any abelian category $\cal A$ can be fully faithfully embedded in a Grothendieck topos, namely the topos of regular sheaves on it, and recovered from it as the full subcategory on its supercoherent objects. The universal property of Theorem \ref{thm:main} can thus be viewed as arising from that of the classifying topos ${\cal E}_{{\mathbb T}_{T}}$ of ${\mathbb T}_{T}$; indeed, the functor $F_{S}:{\cal C}_{{\mathbb T}_{T}}\to {\cal A}$ is the restriction to the full subcategories of supercoherent objects of the inverse image functor of the geometric morphism $\Sh({\cal A}, J_{{\cal A}}^{\textrm{reg}})\to {\cal E}_{{\mathbb T}_{T}}\simeq \Sh(  {\cal C}_{{\mathbb T}_{T}}, J_{{\cal C}_{{\mathbb T}_{T}}}^{\textrm{reg}})$ induced by the model $y_{\cal A}\circ S$ of ${\mathbb T}_{T}$ in $\Sh({\cal A}, J_{{\cal A}}^{\textrm{reg}})$, where $y_{{\cal A}}$ is the Yoneda embedding ${\cal A}\hookrightarrow \Sh({\cal A}, J_{{\cal A}}^{\textrm{reg}})$.

\item The syntactic category ${\cal C}_{{\mathbb T}_{T}}$ satisfies a stronger universal property than that of Theorem \ref{thm:Nori}; that is, the uniqueness of the functor $F_{S}$ making the left-hand triangle (strictly) commute is strict and not up to isomorphism as in Remark \ref{rem:isomorphisms}.      
\end{enumerate}
\end{remarks}

\subsection{Generating structured subcategories}\label{sec:subcategories}

A natural problem, which is particularly relevant for the purposes of this paper, is that of explicitly describing the regular (resp. effective regular, abelian) subcategory ${\cal C}_{\cal F}$ of a regular (resp. effective regular, abelian) category $\cal C$ generated by a given family $\cal F$ of objects and arrows in $\cal C$, in the sense of being the smallest regular (resp. effective regular, abelian) subcategory of $\cal C$ containing $\cal F$. 

Such a category always exist, since the property of being closed under finite limits and images (resp., finite limits, images and coequalizers of equivalence relations,  finite products, kernels and cokernels) is stable under intersection of subcategories (recall that our categories are endowed with \emph{canonical} choices of the relevant limits and colimits, so that we have well-defined limit or colimit functors, for diagrams of the appropriate shapes, on them). Such a subcategory can thus be built by means of an inductive process, as follows. We set ${\cal C}^{0}_{\cal F}$ equal to the subcategory of $\cal C$ generated by the objects and arrows in $\cal F$. For any natural number $n\in {\mathbb N}$, we set ${\cal C}^{n+1}_{\cal F}$ equal to the subcategory of $\cal C$ generated by the objects and arrows obtained by applying the finite limit and image functors (resp., the finite limits, images and coequalizers of equivalence relations functors, the finite product, kernel and cokernel functors) to the diagrams with values in ${\cal C}^{n}_{\cal F}$. It is clear that the union ${\cal C}_{\cal F}$ of the subcategories ${\cal C}^{n}_{\cal F}$ (for $n\in {\mathbb N}$) is the smallest regular (resp. effective regular, abelian) subcategory ${\cal C}_{\cal F}$ of $\cal C$ containing $\cal F$.     
 
The notion of syntactic category recalled in section \ref{sec:preliminaries} comes to our aid in obtaining more explicit descriptions of the categories ${\cal C}_{\cal F}$, as follows. 

Given a regular category $\cal C$, we can attach to the family $\cal F$ a signature $\Sigma_{\cal F}$ having a sort for each object in $\cal F$ and a function symbol for each arrow in $\cal F$. The embedding ${\cal F}\hookrightarrow {\cal C}$ defines a $\Sigma_{\cal F}$-structure $T_{{\cal F}}$ in $\cal C$. We can consider the regular theory $\Th(T_{{\cal F}})$ of this structure over $\Sigma_{\cal F}$. By definition of $\Th(T_{{\cal F}})$, the structure $T_{{\cal F}}$ is a conservative $\Th(T_{{\cal F}})$-model in $\cal C$. Therefore the embedding ${\cal F}\hookrightarrow {\cal C}$ extends to a conservative (faithful) exact functor $F_{\cal F}$ from the regular syntactic category ${\cal C}^{\rm reg}_{\Th(T_{{\cal F}})} $ of the theory $\Th(T_{{\cal F}})$ to ${\cal C}$. If $\cal C$ is effective regular, the functor $F_{\cal F}$ extends, by Lemma \ref{lemma:conservativity}, to a conservative (faithful) exact functor $F_{\cal F}$ from ${{\cal C}^{\rm reg}_{\Th(T_{{\cal F}})}}^{\rm eff}$ to $\cal C$.  

As shown by the following theorem, if the functor $F_{\cal F}$ is full on objects then it yields an equivalence between ${\cal C}^{\rm reg}_{\Th(T_{{\cal F}})}$ (resp. ${{\cal C}^{\rm reg}_{\Th(T_{{\cal F}})}}^{\rm eff}$  ) and the regular (resp. effective regular) subcategory ${\cal C}_{\cal F}$ of $\cal C$ generated by $\cal F$. 

\begin{theorem}\label{thm:regsubcategory}
Let $\cal C$ be a small category and $\cal F$ a family of objects and arrows of $\cal C$. Then

\begin{enumerate}[(i)]
\item If $\cal C$ is regular then the functor $F_{\cal F}:{\cal C}^{\rm reg}_{\Th(T_{{\cal F}})} \to {\cal C}$ yields an equivalence of categories into the regular subcategory ${\cal C}_{\cal F}$ of $\cal C$ generated by $\cal F$ if and only if it is full on objects;

\item If $\cal C$ is effective regular then the functor $F_{\cal F}:{{\cal C}^{\rm reg}_{\Th(T_{{\cal F}})}}^{\rm eff} \to {\cal C}$ yields an equivalence of categories into the effective regular subcategory ${\cal C}_{\cal F}$ of $\cal C$ generated by $\cal F$ if and only if it is full on objects.  
\end{enumerate}
\end{theorem}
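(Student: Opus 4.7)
The plan is to treat the two directions of the equivalence separately, for each of the two parts, with the substantive content lying in the backward direction.

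For the forward direction of both (i) and (ii), suppose $F_{\cal F}$ is an equivalence of categories onto ${\cal C}_{\cal F}$. Then $F_{\cal F}$ is in particular full, so given objects $a, a'$ in its domain with $F_{\cal F}(a) = F_{\cal F}(a')$, the identity on $F_{\cal F}(a)$ lifts to a (necessarily invertible) morphism $f:a\to a'$ with $F_{\cal F}(f)=1_{F_{\cal F}(a)}$. This is exactly the condition of being full on objects.

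For the backward direction, first observe that $F_{\cal F}$ is faithful: $T_{{\cal F}}$ is by construction a conservative model of $\Th(T_{{\cal F}})$, so $F_{\cal F}$ is conservative, and any conservative exact functor on a category with equalizers is faithful (as discussed at the start of Section \ref{sec:preliminaries}); Lemma \ref{lemma:conservativity} extends this faithfulness to the effective setting of part (ii). Combined with the fullness on objects, Lemma \ref{lemma:fullonobjects} then yields an equivalence between the domain of $F_{\cal F}$ and the subcategory $\im(F_{\cal F})$ of $\cal C$ whose objects and arrows are precisely those of the form $F_{\cal F}(a)$ and $F_{\cal F}(f)$.

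It remains to identify $\im(F_{\cal F})$ with ${\cal C}_{\cal F}$. The inclusion $\im(F_{\cal F})\subseteq {\cal C}_{\cal F}$ is established by induction on the syntactic complexity of objects and arrows of the domain of $F_{\cal F}$: every object is a regular formula built from the sorts of $\Sigma_{\cal F}$ by iterated finite conjunction and existential quantification (plus, in case (ii), quotients by equivalence relations), and these operations are semantically interpreted as finite limits, images, and coequalizers of equivalence relations. Since $F_{\cal F}$ preserves all these constructions and sends sorts and function symbols of $\Sigma_{\cal F}$ precisely to the generating objects and arrows of $\cal F$, the image $\im(F_{\cal F})$ is contained in ${\cal C}_{\cal F}$. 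The reverse inclusion reduces to showing that $\im(F_{\cal F})$ is itself a regular (resp.\ effective regular) subcategory of $\cal C$ containing $\cal F$, so that minimality of ${\cal C}_{\cal F}$ yields ${\cal C}_{\cal F} \subseteq \im(F_{\cal F})$.

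The main obstacle lies in this last step. The subtlety is that $F_{\cal F}$ preserves regular structure only up to canonical isomorphism, so the canonical finite limit (or image, or quotient) in $\cal C$ of a diagram in $\im(F_{\cal F})$ is a priori only known to be canonically isomorphic to an object of the form $F_{\cal F}(L)$, not strictly equal to it. The hypothesis that $F_{\cal F}$ be full on objects is precisely what is needed to conclude that the canonical regular constructions in $\cal C$ performed on objects of $\im(F_{\cal F})$ actually land in $\im(F_{\cal F})$, so that this subcategory genuinely inherits the regular (resp.\ effective regular) structure of $\cal C$.
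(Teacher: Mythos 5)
Your proof follows essentially the same route as the paper's: necessity of fullness on objects is immediate, faithfulness of $F_{\cal F}$ comes from conservativity of the model $T_{\cal F}$ (extended via Lemma \ref{lemma:conservativity} in the effective case), Lemma \ref{lemma:fullonobjects} gives the equivalence onto the image, and the identification of the image with ${\cal C}_{\cal F}$ proceeds by the same double inclusion (syntactic description of the objects as iterated finite limits, images and quotients plus exactness of $F_{\cal F}$ for one direction, minimality of ${\cal C}_{\cal F}$ for the other). Your closing remark about canonical choices of limits flags a genuine point that the paper passes over in silence, though its resolution there is asserted rather than argued; this does not affect the overall correctness of the approach.
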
  

\begin{proof}
The fullness on objects is clearly a necessary condition for the functor to yield an equivalence of categories, so it remains to prove that it is a sufficient condition. Since $F_{\cal F}$ is faithful, by Lemma \ref{lemma:fullonobjects} it yields an equivalence onto its image. So, what we have to prove is that the category ${\cal C}_{\cal F}$ equals the image of $F_{\cal F}$.      

If $\cal C$ is regular, the functor $F_{\cal F}:{\cal C}^{\rm reg}_{\Th(T_{{\cal F}})} \to {\cal C}$ sends each regular formula-in-context $\{\vec{x}. \phi\}$ over $\Sigma_{\cal F}$ to its interpretation in the structure $T_{{\cal F}}$. Now, any object $\{\vec{x}. \phi\}$ of ${\cal C}^{\rm reg}_{\Th(T_{{\cal F}})}$ is canonically obtained by taking finite limits and images starting from objects of the form $\{x^{c}. \top\}$ (for an object $c$ of $\cal F$) and arrows between them which are given by terms over the signature $\Sigma_{\cal F}$. Since $F_{\cal F}$ is exact and the category ${\cal C}_{\cal F}$ contains $\cal F$ and is closed under finite limits and images in $\cal C$, it follows that the image of $F_{\cal F}$ is contained in the regular subcategory ${\cal C}_{\cal F}$ of $\cal C$ generated by $\cal F$ (the case of arrows does not need a separated treatment since in any cartesian category an arrow can be identified with its graph, which is a subobject of a finite product). But ${\cal C}_{\cal F}$ is a regular category containing all the objects of $\cal F$, therefore it must be equal to ${\cal C}_{\cal F}$. This proves part (i) of the theorem. Part (ii) follows by similar arguments, using Lemma \ref{lemma:conservativity} to ensure the faithfulness of $F_{\cal F}:{{\cal C}^{\rm reg}_{\Th(T_{{\cal F}})}}^{\rm eff} \to {\cal C}$.  
\end{proof}  

\begin{remark}
If $\cal C$ is an $R$-linear abelian category then, by taking ${\cal F}^{\rm ab}$ to be the family consisting of the objects and arrows of $\cal F$ plus the arrows $r_{a}:c\to c$ for each object $c$ of $\cal F$ and each element $a\in R$ and the addition arrows $c\times c\to c$ in $\cal C$ for all the objects $c$ of $\cal F$, the category ${\cal C}_{{\cal F}^{\rm ab}}$ gets identified with the $R$-linear abelian subcategory of $\cal C$ generated by the family $\cal F$. In particular, under the hypotheses of Theorem \ref{thm:main}, taking $\cal F$ equal to the family ${\cal F}_{S}$ of objects and arrows in the image of the representation $S$ and $\cal C$ equal to $\cal A$, the theorem allows to identify, up to equivalence, the category ${\cal C}_{{\mathbb T}_{T}}$ with the $R$-linear abelian subcategory ${\cal A}_{D}$ of ${\cal A}$ generated by ${\cal F}_{S}$, provided that the functor $F_{S}$ is full on objects. Notice that ${\cal A}_{D}$ is not in general closed under isomorphisms in ${\cal A}$ (recall that we have used distinguished choices of finite limits and cokernels in $\cal A$ to define it).      
\end{remark}

Theorem \ref{thm:regsubcategory} allows, under its hypotheses, to obtain an explicit description of the regular (resp. effective regular) subcategory of a regular (resp. effective regular) category $\cal C$ generated by the family $\cal F$ not involving an inductive process, as shown by the following corollary. We shall apply this result in the abelian setting in section \ref{sec:comparisoncomodules}.

\begin{corollary}\label{cor_description_syntactic}
Let $\cal F$ be a family of objects and arrows in a category $\cal C$. Under the hypotheses of Theorem \ref{thm:regsubcategory}:
\begin{enumerate}[(i)]
\item If $\cal C$ is regular then the regular subcategory ${\cal C}_{\cal F}$ of $\cal C$ generated by $\cal F$ has as objects the images under projections $\pi_{\vec{A}}:A_{1}\times \cdots \times A_{n}\times B_{1}\times \cdots \times B_{m}\to A_{1}\times \cdots \times A_{n}$, where all the objects in $\vec{A}=(A_{1},\ldots, A_{n})$ and in $\vec{B}=(B_{1},\ldots, B_{m})$ are in $\cal F$, of equalizers of arrows $A_{1}\times \cdots \times A_{n}\times B_{1}\times \cdots \times B_{m}\to C_{1}\times \cdots \times C_{k}$ of the form $<s_{1}, \ldots, s_{k}>$, where for each $i$, $s_{i}$ is a term $A_{1}\times \cdots \times A_{n}\times B_{1}\times \cdots \times B_{m}\to C_{i}$ (where all the $C_{i}$ lie in $\cal F$). Notice that every object of ${\cal C}_{\cal F}$ is naturally equipped with a context, that is with the structure of a subobject of a finite product $A_{1}\times \cdots \times A_{n}$ of objects in $\cal F$. Conversely, every subobject of $A_{1}\times \cdots \times A_{n}$ in ${\cal C}_{\cal F}$ is, up to isomorphism, of this form. The arrows between any two such objects-in-context $S\mono A_{1}\times \cdots \times A_{n}$ and $S'\mono A_{1}'\times \cdots \times A_{n'}'$ are the arrows $S\to S'$ in $\cal C$ such that their graph is a subobject of $A_{1}\times \cdots \times A_{n}\times A_{1}'\times \cdots \times A_{n'}'$ of the above form.   

\item If $\cal C$ is effective regular then the effective regular subcategory ${\cal C}_{\cal F}$ of $\cal C$ generated by $\cal F$ is the exact completion of the category ${\cal C}_{\cal F}^{\textrm{reg}}$ described at point (i): its objects are the quotients in $\cal C$ by equivalence relations in ${\cal C}_{\cal F}^{\textrm{reg}}$ and its arrows $X\slash E\to Y\slash F$ between any two such quotients are the arrows $\alpha$ in $\cal C$ between them such that the subobject of $X\times Y$ given by the pullback of $\alpha \circ q_{E}$ along $q_{F}$, where $q_{E}:X\to X\slash E$ and $q_{F}:Y \to Y\slash F$ are the canonical projections, lies in ${\cal C}_{\cal F}^{\textrm{reg}}$.   

\item If $\cal C$ is abelian then the abelian subcategory ${\cal C}_{\cal F}$ of $\cal C$ generated by $\cal F$ has as objects the quotients of objects of the category ${\cal C}_{\cal F}^{\textrm{reg}}$ described at point (i) by subobjects in $\cal C$ in the same context (notice that in the abelian setting the description of the objects of ${\cal C}_{\cal F}^{\textrm{reg}}$ simplifies, i.e. these objects are precisely the images under projections of kernels of arrows of the form $<s_{1}, \ldots, s_{k}>:A_{1}\times \cdots \times A_{n}\times B_{1}\times \cdots \times B_{m}\to C_{1}\times \cdots \times C_{k}$), and as arrows $X\slash X'\to Y\slash Y'$ between any two such quotients the arrows $\alpha$ in $\cal C$ between them such that the subobject of $X\times Y$ given by the pullback of $\alpha \circ q_{X'}$ along $q_{Y'}$, where $q_{X'}:X\to X\slash X'$ and $q_{Y'}:Y \to Y\slash Y'$ are the canonical projections, lies in ${\cal C}_{\cal F}^{\textrm{reg}}$. 

An alternative description of ${\cal C}_{\cal F}$ is as follows. The objects of ${\cal C}_{\cal F}$ are quotients $K\slash K'$ of kernels $K$ of arrows of the form $\vec{s}=<s_{1}, \ldots, s_{k}>:A_{1}\times \cdots \times A_{n} \to C_{1}\times \cdots \times C_{k}$ by subobjects $K'$ of them which are given by images by projections $A_{1}\times \cdots \times A_{n}\times B_{1}\times \cdots \times B_{m}\to A_{1}\times \cdots \times A_{n}$ of kernels of arrows of the form $\vec{t}=<t_{1}, \ldots, t_{r}>:A_{1}\times \cdots \times A_{n}\times B_{1}\times \cdots \times B_{m} \to D_{1}\times \cdots \times D_{r}$ (where all the objects in $\vec{A}=(A_{1}, \ldots, A_{n})$, $\vec{B}=(B_{1},\ldots, B_{m})$ and $\vec{C}=(C_{1},\ldots, C_{k})$ are in $\cal F$ and the $s_{i}$ and $t_{j}$ are terms over $\Sigma_{\cal F}$). 
\begin{center} 
\begin{tikzpicture}[scale=0.95, every node/.style={scale=0.95}]
\node (0) at (0,0) {$\textrm{Ker}(\vec{t})$};
\node (1) at (5,0) {$A_{1}\times \cdots \times A_{n}\times B_{1}\times \cdots \times B_{m}$};
\node (2) at (10,0) {$D_{1}\times \cdots \times D_{r}$};
\node (3) at (1,1) {$K'$};
\node (4) at (1,2) {$K=\textrm{Ker}(\vec{s})$};
\node (5) at (5,2) {$A_{1}\times \cdots \times A_{n}$};
\node (6) at (10,2) {$C_{1}\times \cdots \times C_{k}$};
\draw[->](1) to node [above]{$\vec{t}$} (2);
\draw[right hook->](0) to node {} (1);
\draw[right hook->](4) to node {} (5);
\draw[>->](3) to node [above]{} (4);
\draw[->](1) to node [above]{$\vec{t}$} (2);
\draw[->](5) to node [above]{$\vec{s}$} (6);
\draw[->](1) to node [right]{$\pi_{\vec{A}}$} (5);
\draw[>->](3) to node [above]{ } (5);
\draw[->>](0) to node [above]{ } (3);
\end{tikzpicture}
\end{center}

The arrows $K\slash K' \to H\slash H'$ between any two such quotients, where $K\mono A_{1}\times \cdots \times A_{n}$ and $H\mono A_{1}'\times \cdots \times A_{n'}'$  are the arrows $\alpha$ in $\cal C$ between them such that the subobject of $K\times H$ given by the pullback of $\alpha \circ q_{K}$ along $q_{H}$, where $q_{K}:K\to K\slash K'$ and $q_{H}:H \to H\slash H'$ are the canonical projections, lies in ${\cal C}_{\cal F}^{\textrm{reg}}$, that is it is the image under a projection to $A_{1}\times \cdots \times A_{n}\times A_{1}'\times \cdots \times A_{n'}'$ of the kernel of an arrow of the form $<r_{1}, \ldots, r_{z}>$.    
\end{enumerate}
\end{corollary}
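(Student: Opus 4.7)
The plan is to apply Theorem \ref{thm:regsubcategory} in each case and unfold the explicit description of the (effectivized) regular syntactic category of $\Th(T_{\cal F})$ in terms of formulas over $\Sigma_{\cal F}$. The key observation driving the translation is that $\Sigma_{\cal F}$ contains no relation symbols, so every atomic formula over it is an equation between terms; consequently every regular formula over $\Sigma_{\cal F}$ is provably equivalent to one in prenex normal form $(\exists \vec{y})\bigwedge_{i=1}^{k}(s_{i}(\vec{x}, \vec{y}) = t_{i}(\vec{x}, \vec{y}))$. This is what matches the geometric shapes displayed in the statement.

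For part (i), I would interpret such a normal-form formula in the structure $T_{\cal F}$ inside $\cal C$: the inner conjunction of equations is interpreted as the equalizer of the pair of arrows $\langle s_{1}, \ldots, s_{k}\rangle, \langle t_{1}, \ldots, t_{k}\rangle : A_{1}\times \cdots \times A_{n}\times B_{1}\times \cdots \times B_{m} \to C_{1}\times \cdots \times C_{k}$, and the existential quantifier is interpreted as the image under the projection $\pi_{\vec{A}}$. This is exactly the description of objects stated. The description of arrows then follows from the fact that morphisms in ${\cal C}^{\textrm{reg}}_{\Th(T_{\cal F})}$ are equivalence classes of provably functional regular formulas in a joint (disjoint) context, whose graph is again a formula of the same prenex form on the product. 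Combining with Theorem \ref{thm:regsubcategory}(i), under the standing fullness-on-objects hypothesis, yields part (i).

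For part (ii), I would invoke the explicit description of the exact completion recalled in section \ref{sec:exactcompletion} (following \cite{CarboniVitale} and \cite{Lack}): objects of the effectivization of ${\cal C}^{\textrm{reg}}_{\Th(T_{\cal F})}$ are pairs $(X, E)$ with $E$ an equivalence relation on an object $X$ of ${\cal C}_{\cal F}^{\textrm{reg}}$, interpreted in $\cal C$ as the quotient $X/E$; and arrows $X/E \to Y/F$ correspond to relations characterised by the pullback square displayed there. Transporting this picture along the equivalence of Theorem \ref{thm:regsubcategory}(ii) produces exactly the description stated.

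For part (iii), I would use that in an abelian category equivalence relations correspond bijectively to subobjects via $E \leftrightarrow \ker(q_{E})$, with $X/E$ agreeing with $X/K'$ for $K' = \ker(q_{E})$; this lets me rewrite the description from (ii) with subobjects replacing equivalence relations. The simplification of the objects of ${\cal C}_{\cal F}^{\textrm{reg}}$ to images under projections of \emph{kernels} rather than equalizers uses the identity $\mathrm{Eq}(f,g) = \ker(f-g)$ available in any additive category. The alternative description at the end of (iii) is a combinatorial rewriting showing that every object can be presented as $K/K'$ where $K = \ker(\vec{s})$ and $K'$ is the image under a projection of a further kernel $\ker(\vec{t})$; this follows by writing both the ambient object and its distinguished subobject in prenex normal form and collecting existential variables. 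The main obstacle is purely bookkeeping between the logical (formulas-in-context) and the geometric (kernels, images, quotients) sides, especially in (iii); no genuinely new ingredient beyond Theorem \ref{thm:regsubcategory} and the explicit description of the exact completion is required.
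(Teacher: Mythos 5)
Your proposal is correct and follows essentially the same route as the paper's proof: reduction of regular formulae over the relation-symbol-free signature $\Sigma_{\cal F}$ to prenex normal form (the paper cites Lemma D1.3.8(i) of \cite{El} for this), interpretation of conjunctions of equations as equalizers of tupled arrows and of existentials as images under projections, the explicit description of the exact completion for part (ii), and the additive simplifications (equalizers as kernels of differences, equivalence relations as subobjects) for part (iii). The only detail worth making explicit is that the description of the \emph{arrows} in part (i) also uses the conservativity of the model $T_{\cal F}$ for $\Th(T_{\cal F})$, so that provable functionality can be checked semantically in $\cal C$.
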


\begin{proof}
By Lemma D1.3.8(i) \cite{El}, any regular formula-in-context over a given signature $\Sigma$ is provably equivalent (in the empty theory) to a formula of the form $(\exists \vec{x})\phi(\vec{x}, \vec{y})$, where $\phi$ is a formula over $\Sigma$ in the same context obtained from atomic formulae by only using finite conjunctions (including the empty disjunction, yielding the truth formula $\top$). Now, if two formulae-in-context $\chi_{1}(\vec{x})$ and $\chi_{2}(\vec{x})$ are provably equivalent in the empty theory over $\Sigma$ then for any regular theory over $\Sigma$ they yield isomorphic objects $\{\vec{x}. \chi_{1}\}$ and $\{\vec{x}. \chi_{2}\}$ in its syntactic category. On the other hand, by the very definition of arrows in ${\cal C}_{\mathbb T}^{\textrm{reg}}$, one can suppose without loss of generality the formulae $\theta$ defining them to be of this form. Therefore the full subcategory of ${\cal C}_{\mathbb T}^{\textrm{reg}}$ on the formulae-in-context of this form is equivalent to ${\cal C}_{\mathbb T}^{\textrm{reg}}$. 
 
Let us now proceed to derive each of the points of the theorem from Theorem \ref{thm:regsubcategory}. 

(i) The above remark implies that the objects in the image of the functor $F_{\cal F}$ are precisely the interpretations in the structure $T_{\cal F}$ of the formulae of the form $(\exists \vec{x})\phi(\vec{x}, \vec{y})$, where $\phi$ is a formula over $\Sigma$ in the same context obtained from atomic formulae by only using finite conjunctions (including the empty disjunction, yielding the truth formula $\top$). Now, an atomic formula $\theta(\vec{x}, \vec{y})$ over $\Sigma_{\cal F}$, where $\vec{x}=(x_{1}^{A_{1}}, \ldots, x_{n}^{A_{n}})$ and $\vec{y}=(y_{1}^{B_{1}}, \ldots, y_{m}^{B_{m}})$, is a formula of the form $s=t$, where $s$ and $t$ are terms $A_{1}\times \cdots A_{n}\times B_{1}\times \cdots \times B_{m}\to C$ over $\Sigma_{\cal F}$. Its interpretation is the subobject of $A_{1}\times \cdots A_{n}\times B_{1}\times \cdots \times B_{m}$ given by the the equalizer of $s$ and $t$, while the interpretation of a finite conjunction $s_{1}=t_{1}\wedge \cdots \wedge s_{k}=t_{k}$ is the intersection of the equalizers $Eq(s_{i}, t_{i})$ (for all $i=1, \ldots, k$), equivalently the equalizer of the arrows $<s_{1}, \ldots, s_{k}>, <t_{1}, \ldots, t_{k}>:A_{1}\times \cdots A_{n}\times B_{1}\times \cdots \times B_{m} \to C_{1}\times \cdots \times C_{k}$. On the other hand, the interpretation of the formula $(\exists \vec{x})(s_{1}=t_{1} \wedge \cdots \wedge s_{k}=t_{k})(\vec{x}, \vec{y})$ is given by the image of composite of the subobject $Eq(<s_{1}, \ldots, s_{k}>, <t_{1}, \ldots, t_{k}>)\mono A_{1}\times \cdots A_{n}\times B_{1}\times \cdots \times B_{m}$ with the canonical projection $\pi_{\vec{A}}:A_{1}\times \cdots \times A_{n}\times B_{1}\times \cdots \times B_{m}\to A_{1}\times \cdots \times A_{n}$. From this, the description of the objects of ${\cal C}_{\cal F}$ follows immediately. The description of arrows follows immediately from that of objects, using the identification between an arrow and its graph, and the fact that the model $T_{\cal F}$ is conservative for the theory $\Th(T_{\cal F})$.

The fact that every object of ${\cal C}_{\cal F}$ is naturally equipped with a context, that is with the structure of a subobject of a finite product $A_{1}\times \cdots \times A_{n}$ of objects in $\cal F$ and that, conversely, every subobject of $A_{1}\times \cdots \times A_{n}$ in ${\cal C}_{\cal F}$ is, up to isomorphism, of this form follows from the fact that any object of the syntactic category of a regular theory is a regular-formula-in-context $\{\vec{x}. \phi\}$ over its signature, which defines a subobject of $\{\vec{x}.\top\}$, and every subobject of $\{\vec{x}.\top\}$ in this category is isomorphic to one of this form.    

(ii) This follows at once from point $(i)$ by using the description of the exact completion of a regular category given in section \ref{sec:exactcompletion}.

(iii) This follows from point (ii) by observing that:
\begin{itemize}
\item Since terms can be subtracted in an additive context,
the description of the objects of ${\cal C}_{\cal F}$ simplifies, i.e. the objects of ${\cal C}_{\cal F}$ are precisely the images under projections of kernels of arrows of the form $<s_{1}, \ldots, s_{k}>:A_{1}\times \cdots \times A_{n}\times B_{1}\times \cdots \times B_{m}\to C_{1}\times \cdots \times C_{k}$; 

\item Quotients by equivalence relations are, in the abelian setting, quotients by subobjects, and in the regular syntactic category of a regular theory $\mathbb T$, the subobjects of an object $\{\vec{x}. \phi\}$ are, up to isomorphism, all of the form $\{\vec{x}. \psi\}\mono \{\vec{x}. \phi\}$, where $\psi$ is a formula in the context $\vec{x}$ which $\mathbb T$-provably implies $\phi$, in other words, $T_{\cal F}$ being conservative, they correspond exactly to the definable subobjects of $[[\vec{x}. \phi]]_{T_{\cal F}}$ in $\cal C$.
\end{itemize}

The alternative description of ${\cal C}_{\cal F}$ given in point (iii) follows from the fact that every object of ${\cal C}_{\cal F}^{\textrm{reg}}$ is a quotient (in ${\cal C}_{\cal F}^{\textrm{reg}}$) of an object which is the kernel of an arrow of the form $<s_{1}, \ldots, s_{k}>$, from the characterization of subobjects of finite products of objects in $\cal F$ provided by point (i) of the theorem and from the fact that ${\cal C}_{\cal F}^{\textrm{reg}}$ is closed under subobjects in its effectivization (see section \ref{sec:preliminaries}).   
\end{proof}

\subsection{The (in)dependence from $T$}

Theorem \ref{thm:main} shows that, given two representations $T:D \to \Rmod$ and $T':D'\to \Rpmod$, the categories ${\cal C}_{{\mathbb T}_{T}}$ and ${\cal C}_{{\mathbb T}_{T'}}$ are equivalent if and only if the theories $\Th(T)$ and $\Th(T')$ are Morita-equivalent, i.e. they have equivalent classifying toposes (recall from section \ref{sec:preliminaries} the intrinsic characterization of the effectivization of the regular syntactic category of a regular theory as the full subcategory of its classifying topos on the supercoherent objects). 

Particularly interesting is the case when $D=D'$. In this case, by the conservativity of universal models in syntactic categories, we have that there is an equivalence ${\cal C}_{{\mathbb T}_{T}}\simeq {\cal C}_{{\mathbb T}_{T'}}$ compatible with the canonical representations of $D$ in the two syntactic categories if and only if $\Th(T)=\Th(T')$ as theories over the signature $L_{D}$, in other words if and only if for every regular sequent $\sigma$ over $L_{D}$, $\sigma$ is satisfied by $T$ if and only if it is satisfied by $T'$. Notice that if $D$ is Nori's diagram of motives, the exactness conditions are expressible as regular sequents over $L_{D}$, whilst the bounds on dimension are not. This contrasts with the usual feeling that in order to prove the independence of a category of motives from any of its realizations the knowledge of the dimensions of the cohomology groups be essential.

We can reformulate the condition $\Th(T)=\Th(T')$ algebraically by using the explicit description of the interpretations of regular formulae over $L_{D}$ in a regular category obtained in section \ref{sec:subcategories}, as follows. Since $T$ (resp. $T'$) is a conservative model of $\Th(T)$ (resp. of $\Th(T')$), a regular sequent $(\psi \vdash_{\vec{x}} \phi)$ over $L_{D}$ is provable in $\Th(T)$ (resp. in $\Th(T')$) if and only if it is valid in $T$ (resp. in $T'$), that is if and only if $[[\vec{x}.\psi]]_{T}\subseteq [[\vec{x}.\psi]]_{T}$ (resp. $[[\vec{x}.\psi]]_{T'}\subseteq [[\vec{x}.\psi]]_{T'}$). Now, the objects of the form $[[\vec{x}.\psi]]_{T}$ are precisely the images under canonical projections $p^{T}_{\vec{d}, \vec{e}}:T(d_{1})\times \cdots \times T(d_{n})\times T(e_{1})\times \cdots \times T(e_{m})\to T(d_{1})\times \cdots \times T(d_{n})$ of kernels of arrows $T(d_{1})\times \cdots \times T(d_{n})\times T(e_{1})\times \cdots \times T(e_{m}) \to T(c_{1})\times \cdots \times T(c_{k})$ whose components are linear combinations of arrows of the form $T(f)$ for $f$ an edge of $D$ with coefficients in $R$, and similarly for $T'$. 

Summarizing, we have the following

\begin{theorem}\label{thm:independence}
	
\begin{enumerate}[(i)]
\item Given two representations $T:D \to \Rmod$ and $T':D'\to \Rpmod$ (or more generally, any pair of representations of diagrams in effective regular categories), the categories ${\cal C}_{{\mathbb T}_{T}}$ and ${\cal C}_{{\mathbb T}_{T'}}$ are equivalent if and only if the theories $\Th(T)$ and $\Th(T')$ are Morita-equivalent.	
		
\item Let $R$ be a ring, $D$ a diagram and $T, T':D \to \Rmod$ representations of $D$. Then we have an equivalence of categories $\xi:{\cal C}_{{\mathbb T}_{T}} \to {\cal C}_{{\mathbb T}_{T'}}$ making the diagram
\begin{center}
\begin{tikzpicture}
\node (1) at (3,0) {${\cal C}_{{\mathbb T}_{T}}$};
\node (6) at (3,-2) {${\cal C}_{{\mathbb T}_{T'}}$};
\node (2) at (0, -2) {$D$};
\draw[->](2) to node [above]{$\tilde{T}$} (1);
\draw[->](2) to node [above]{$\tilde{T'}$} (6);
\draw[dashed, ->](1) to node [right]{$\xi$} (6);
\end{tikzpicture}
\end{center}
commute if and only if $\Th(T)=\Th(T')$, equivalently if and only if for any tuples $$<s_{1}, \ldots, s_{k}>:d_{1}, \ldots, d_{n}, e_{1}, \ldots, e_{m}\to c_{1}, \ldots, c_{k}$$ and $$<s_{1}', \ldots, s_{k'}'>:d_{1}, \ldots, d_{n}, e_{1}, \ldots, e_{m}\to c_{1}', \ldots, c_{k'}'$$ of $R$-linear combinations of edges in $D$,
\[
p^{T}_{\vec{d}, \vec{e}}(\textrm{Ker}(T(<s_{1}, \ldots, s_{k}>))\subseteq p^{T}_{\vec{d}, \vec{e}}(\textrm{Ker}(T(<s_{1}', \ldots, s_{k'}'>))
\]
as subobjects of $T(d_{1})\times \cdots \times T(d_{n})$ in $\Rmod$ if and only if 
\[
p^{T'}_{\vec{d}, \vec{e}}(\textrm{Ker}(T'(<s_{1}, \ldots, s_{k}>))\subseteq p^{T'}_{\vec{d}, \vec{e}}(\textrm{Ker}(T'(<s_{1}', \ldots, s_{k'}'>))
\]
as subobjects of $T'(d_{1})\times \cdots \times T'(d_{n})$ in $\Rmod$.  
\end{enumerate}
\end{theorem}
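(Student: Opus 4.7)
My plan is to deduce part (i) directly from the intrinsic characterization of the classifying topos recalled at the end of Section \ref{sec:regularlogic}, and part (ii) by combining the strict compatibility of $\xi$ with $\tilde{T}$ and $\tilde{T'}$ with the conservativity of the universal models. The final algebraic reformulation will then follow by applying Corollary \ref{cor_description_syntactic}.

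For (i), the strategy is to use that the classifying topos of a regular theory is the topos of sheaves on the effectivization of its regular syntactic category for the regular topology, and that, conversely, this effectivization is recovered intrinsically from the classifying topos as the full subcategory of supercoherent objects. Since the regular and effective regular structure on these categories is determined by the underlying category (covers being regular epimorphisms, limits and quotients being universal), any equivalence ${\cal C}_{{\mathbb T}_{T}}\simeq{\cal C}_{{\mathbb T}_{T'}}$ is automatically an equivalence of effective regular categories, which passes to an equivalence of regular-sheaf toposes, i.e., Morita equivalence of $\Th(T)$ and $\Th(T')$. The converse is immediate from the supercoherent-object characterization.

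For (ii), the ``if'' direction is trivial: if $\Th(T)=\Th(T')$, then ${\mathbb T}_{T}={\mathbb T}_{T'}$ as theories, so the two syntactic categories coincide and $\xi$ is the identity. For the ``only if'' direction, the strict commutativity $\xi\circ\tilde{T}=\tilde{T'}$ forces $\xi(\{x^{d}.\top\})=\{x^{d}.\top\}$ and $\xi([f])=[f]$ for every sort $d$ and every edge $f$ of $D$. Since the equivalence $\xi$ preserves finite limits, images and coequalizers of equivalence relations (the effective regular structure being intrinsic), and since every object $\{\vec{x}.\phi\}$ of either syntactic category is canonically built from the $\{x^{d}.\top\}$ and the arrows $[f]$ together with the $R$-module structure maps (which are themselves preserved because $\xi$ is an equivalence of $R$-linear abelian categories, cf. Lemma \ref{lemma:additive}), we obtain $\xi(\{\vec{x}.\phi\})=\{\vec{x}.\phi\}$ for every regular formula $\phi$ over $L_{D}$. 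Since subobject inclusions are preserved by $\xi$ and correspond under the universal model to provability in the respective theory, it follows that a regular sequent $(\phi\vdash_{\vec{x}}\psi)$ is provable in $\Th(T)$ if and only if it is provable in $\Th(T')$, i.e., $\Th(T)=\Th(T')$.

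For the equivalent algebraic reformulation, I would invoke the conservativity of $T$ (resp.\ $T'$) as a model of $\Th(T)$ (resp.\ $\Th(T')$) to replace provability of $(\phi\vdash_{\vec{x}}\psi)$ by the inclusion $[[\vec{x}.\phi]]_{T}\subseteq [[\vec{x}.\psi]]_{T}$ in $\Rmod$, and apply Corollary \ref{cor_description_syntactic}(iii) (valid in the abelian setting) to identify the interpretations $[[\vec{x}.\phi]]_{T}$ with images under canonical projections $p^{T}_{\vec{d},\vec{e}}$ of kernels of arrows of the form $T(\langle s_{1},\ldots,s_{k}\rangle)$, whose components are $R$-linear combinations of images of edges of $D$. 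The only technical point worth highlighting is making sure the strict compatibility $\xi\circ\tilde{T}=\tilde{T'}$ (rather than compatibility up to isomorphism) really suffices to propagate equality of interpretations through the construction of arbitrary regular formulae; this is where the stronger universal property emphasized in Remarks \ref{rem:generalization}(d) plays a role, and is the one place where one must argue carefully rather than by a purely formal appeal to Morita equivalence as in part (i).
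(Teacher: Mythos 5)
Your proposal is correct and follows essentially the same route as the paper: part (i) via the intrinsic recovery of the effectivized syntactic category as the supercoherent objects of the classifying topos (so that equivalence of the categories is the same as Morita equivalence), part (ii) via transporting the conservative universal model along $\xi$ using the compatibility with $\tilde{T}$ and $\tilde{T'}$, and the algebraic reformulation via conservativity of $T$, $T'$ as set-based models together with the description of interpretations of regular formulae from Corollary \ref{cor_description_syntactic}. The only difference is presentational: where the paper simply invokes ``conservativity of universal models,'' you unwind it by propagating the identification of interpretations through the inductive construction of formulae, which amounts to the same argument.
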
\qed

This result is the beginning of a possible answer to the question of the independence of $\ell$ in $\ell$-adic cohomology theory as phrased for instance in section 11 of \cite{LL}. A fundamentally new input appears in the present paper: the $\mathbb Q$-linear abelian category associated to $\ell$-adic cohomology is no more naively defined as the subcategory of ``geometric'' objects and morphisms in the category of $\ell$-adic representations of the Galois group of the base field.  

\subsection{The relationship with the category $\textbf{Comod}_{\textrm{fin}}(\End^{\vee}(T))$}\label{sec:comparisoncomodules}

Given a field $k$, let $k\textrm{-vect}_{\textrm{f}}$ denote the category of finite-dimensional vector spaces over $k$.
 
In this section we shall compare our syntactic construction of Nori's category with Nori's original construction. We already know, from the universal property of Theorem \ref{thm:main}, that they are canonically equivalent, but it is instructive to see this directly. In fact, this will lead to a concrete description of the objects and arrows of Nori's category in terms of the objects and edges of the diagram, improving that given in \cite{Arapura} (cf. Lemma 2.2.5 therein).

The canonical representation $i_{D}:D\to \textrm{Comod}_{\textrm{fin}}(\End^{\vee}(T))$ is given by the assignment $d \leadsto  T(d)$, where $T(d)$ is endowed with the canonical structure of comodule over $\End^{\vee}(T)$. 

\begin{theorem}\label{thm:equivcat}
Let $D$ be a diagram and $T:D\to k\textrm{-vect}_{\textrm{f}}$ a representation. Then there exists an equivalence of categories $\chi:{\cal C}_{{\mathbb T}_{T}}\to \textrm{Comod}_{\textrm{fin}}(\End^{\vee}(T))$ compatible with the canonical representations $\tilde{T}$ of $D$ in ${\cal C}_{{\mathbb T}_{T}}$ and $i_{D}$ of $D$ in $\textrm{Comod}_{\textrm{fin}}(\End^{\vee}(T))$:
\begin{center}
\begin{tikzpicture}
\node (1) at (3,0) {${\cal C}_{{\mathbb T}_{T}}$};
\node (6) at (3,-3) {$\textrm{Comod}_{\textrm{fin}}(\End^{\vee}(T))$};
\node (2) at (0, -3) {D};
\draw[->](2) to node [above]{$\tilde{T}$} (1);
\draw[->](2) to node [above]{$i_{D}$} (6);
\draw[dashed, ->](1) to node [right]{$\chi$} (6);
\end{tikzpicture}
\end{center}
 
\end{theorem}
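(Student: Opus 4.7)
My plan is to obtain $\chi$ from the universal property of Theorem \ref{thm:main} and to exhibit an inverse via Nori's classical universal property (Theorem \ref{thm:Nori}) enjoyed by $\textrm{Comod}_{\textrm{fin}}(\End^{\vee}(T))$. Concretely, $\textrm{Comod}_{\textrm{fin}}(\End^{\vee}(T))$ is a $k$-linear abelian (hence effective regular) category, the forgetful functor $U:\textrm{Comod}_{\textrm{fin}}(\End^{\vee}(T))\to k\textrm{-vect}_{\textrm{f}}$ is exact and faithful, and by the very construction of the comodule structures on the $T(d)$ one has $U\circ i_{D}=T$. Applying Theorem \ref{thm:main} to this factorisation produces a unique exact $k$-linear functor
\[
\chi:{\cal C}_{{\mathbb T}_{T}}\longrightarrow \textrm{Comod}_{\textrm{fin}}(\End^{\vee}(T))
\]
with $\chi\circ\tilde{T}=i_{D}$ and $U\circ\chi=F_{T}$, which is the sought compatible functor; note that this already yields the strict commutativity of the triangle displayed in the statement.

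To show that $\chi$ is an equivalence I construct an inverse as follows. As recalled in Section \ref{sec:review}, $\textrm{Comod}_{\textrm{fin}}(\End^{\vee}(T))$ is an incarnation of Nori's category ${\cal C}_{T}$ and therefore satisfies the universal property of Theorem \ref{thm:Nori}. Applied to the factorisation $T=F_{T}\circ\tilde{T}$, this yields an exact faithful functor $\psi:\textrm{Comod}_{\textrm{fin}}(\End^{\vee}(T))\to {\cal C}_{{\mathbb T}_{T}}$, unique up to isomorphism, equipped with natural isomorphisms $\psi\circ i_{D}\cong \tilde{T}$ and $F_{T}\circ\psi\cong U$. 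The composites $\psi\circ\chi$ and $\chi\circ\psi$ are then exact functors on ${\cal C}_{{\mathbb T}_{T}}$ and $\textrm{Comod}_{\textrm{fin}}(\End^{\vee}(T))$ respectively, each compatible up to isomorphism with the canonical representation of $D$ and with the faithful exact functor down to $k\textrm{-vect}_{\textrm{f}}$. By the uniqueness-up-to-isomorphism clauses of Theorems \ref{thm:main} and \ref{thm:Nori} applied to the respective identity functors, each composite is naturally isomorphic to the identity, so $\chi$ and $\psi$ are quasi-inverse equivalences.

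The one subtle point to navigate is the mismatch between the strict form of the universal property of ${\cal C}_{{\mathbb T}_{T}}$ (Theorem \ref{thm:main} and Remark \ref{rem:generalization}(d)) and the only up-to-isomorphism form of Nori's universal property (Theorem \ref{thm:Nori}, Remark \ref{rem:isomorphisms}): in deducing $\psi\circ\chi\cong\textrm{id}$ one must work up to isomorphism throughout, invoking the weaker uniqueness-up-to-isomorphism consequence of Theorem \ref{thm:main} rather than its strict form. An alternative, more self-contained route bypassing Theorem \ref{thm:Nori} would be to verify directly that $\chi$ is fully faithful (combining the conservativity of $F_{T}$ obtained in the proof of Theorem \ref{thm:main} with the faithfulness of $U$ and Lemma \ref{lemma:conservativity}) and essentially surjective, using the description of objects of $\textrm{Comod}_{\textrm{fin}}(\End^{\vee}(T))$ via finite subdiagrams recalled in Section \ref{sec:review} together with the explicit presentation of the objects of ${\cal C}_{{\mathbb T}_{T}}$ provided by Corollary \ref{cor_description_syntactic}.
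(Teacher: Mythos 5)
Your argument is correct in outline, but it takes a genuinely different route from the paper's. You construct $\chi$ exactly as the paper does (applying Theorem \ref{thm:main} to the factorisation $U\circ i_{D}=T$ through the forgetful functor $U$), but then you obtain a quasi-inverse abstractly, by invoking Nori's universal property (Theorem \ref{thm:Nori}) for $\textrm{Comod}_{\textrm{fin}}(\End^{\vee}(T))$ and playing the two uniqueness clauses against each other. This is precisely the argument the paper alludes to in the first lines of its proof section (``we already know, from the universal property of Theorem \ref{thm:main}, that they are canonically equivalent'') and then deliberately declines to carry out: instead it proves directly that $\chi$ is essentially surjective and full, by reducing to the finite subdiagrams $F$, choosing bases to identify $\End(T|_{F})$ and its module endomorphisms with definable objects and arrows, presenting every finite-dimensional $\End(T|_{F})$-module as a cokernel of a map of free modules, and lifting equivariant maps between such quotients. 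The trade-off is real: your route is much shorter, but it presupposes Theorem \ref{thm:Nori} together with the identification of $\textrm{Comod}_{\textrm{fin}}(\End^{\vee}(T))$ with ${\cal C}_{T}$, so it cannot serve as an independent derivation of Nori's Tannakian theorem or of classical Tannaka duality, which the paper's direct proof does provide (see the remark following Corollary \ref{cor:explicitdescriptionobjects}); the direct proof also yields the concrete description of objects and subobjects recorded in that corollary. Your flagged subtlety about the mismatch between the strict universal property of ${\cal C}_{{\mathbb T}_{T}}$ and the up-to-isomorphism one of Theorem \ref{thm:Nori} is a genuine point: to conclude $\psi\circ\chi\cong\mathrm{id}$ you need the $2$-categorical form of the universal property, i.e.\ the equivalence ${\mathbb T}_{T}\textrm{-mod}({\cal D})\simeq \textbf{Reg}({\cal C}^{\textrm{reg}}_{{\mathbb T}_{T}},{\cal D})$ recalled in section \ref{sec:regularlogic}, together with the observation that the isomorphism of representations $\psi\circ i_{D}\cong\tilde{T}$ is automatically an isomorphism of $L_{D}$-structures (additivity and $k$-linearity of the functors involved take care of the module operations); this is navigable but should be said explicitly rather than gestured at.
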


\begin{proof}
The fact that $T$ is equal to the the composite of the forgetful functor $\textrm{Comod}_{\textrm{fin}}(\End^{\vee}(T) \to k\textrm{-vect}_{\textrm{f}}$ with the representation $i_{D}$ ensures, by Theorem \ref{thm:main}, the existence of a unique faithful exact functor $\chi:{\cal C}_{{\mathbb T}_{T}}\to \textrm{Comod}_{\textrm{fin}}(\End^{\vee}(T))$ making the above triangle commute. To prove that $\chi$ is an equivalence, it remains to show that it is full and essentially surjective. We can clearly check these conditions on each of the full subcategories $\End(T|_{F})\textrm{-mod}_{\textrm{fin}}$ which cover $
\textrm{Comod}_{\textrm{fin}}(\End^{\vee}(T))=\colim{F\subseteq D \textrm{ finite }}\End(T|_{F})\textrm{-mod}_{\textrm{fin}}$.

Any finite-dimensional (over $k$) $\End(T|_{F})$-module $M$ is clearly a quotient of $\End(T|_{F})^{n}$ for some $n$ in $\End(T|_{F})\textrm{-mod}_{\textrm{fin}}$, and any $\End(T|_{F})$-equivariant map $f:M\to N$ between $\End(T|_{F})$-modules $M$ and $N$ which are quotients of finite powers of $\End(T|_{F})$ via maps $p:\End(T|_{F})^{n}\epi M$ and $q:\End(T|_{F})^{m}\epi M$ lifts to a $\End(T|_{F})$-equivariant map $\tilde{f}:\End(T|_{F})^{n} \to \End(T|_{F})^{m}$ such that $q\circ \tilde{f}=f\circ p$. Moreover, given a quotient map $p:\End(T|_{F})^{n}\epi M$, since the kernel of $p$ is of finite dimension, it is itself the image of a finite power of $\End(T|_{F})$, by a surjective homomorphism $r:\End(T|_{F})^{k}\to ker(p)$. The kernel of $p$ can thus be identified with the image factorisation in $\End(T|_{F})\textrm{-mod}_{\textrm{fin}}$ of a morphism $\End(T|_{F})^{k} \to \End(T|_{F})^{n}$, and $M$ is isomorphic to the quotient of $\End(T|_{F})^{n}$ by it.   

Therefore, to prove the essential surjectivity of the functor $\chi$ it suffices to prove that each algebra $\End(T|_{F})$ is, when considered as an object of the category $\End(T|_{F})\textrm{-mod}_{\textrm{fin}}$, isomorphic to an object in the image of $\chi$ and that every  endomorphism of it is, under this identification, the image under $\chi$ of an endomorphism in ${\cal C}_{{\mathbb T}_{T}}$.  

To identify $\End(T|_{F})$ with an object in the image of $\chi$ we choose, for each object $d$ of $D$, a basis of the finite-dimensional $k$-vector space $T(d)$. This induces isomorphisms of left $\End(T|_{F})$-modules
\[
\mathbin{\mathop{\textrm{ $\prod$}}\limits_{d\in D}} \Hom_{k}(T(d), T(d))\cong \mathbin{\mathop{\textrm{ $\prod$}}\limits_{d\in D}} T(d)^{\textrm{dim}(T(d))} 
\] 
and
\[
\mathbin{\mathop{\textrm{ $\prod$}}\limits_{f:d'\to d'' \textrm{ in } D}} \Hom_{k}(T(d'), T(d''))\cong \mathbin{\mathop{\textrm{ $\prod$}}\limits_{f:d'\to d'' \textrm{ in } D}} T(d'')^{\textrm{dim}(T(d'))}. 
\]

Notice that the objects on the right-hand-side of these isomorphisms are both in the image of the functor $\chi$. Let us now show that, under these isomorphisms, the arrow
\[
S:\mathbin{\mathop{\textrm{ $\prod$}}\limits_{d\in D}} \Hom_{k}(T(d), T(d)) \to \mathbin{\mathop{\textrm{ $\prod$}}\limits_{f:d'\to d'' \textrm{ in } D}} \Hom_{k}(T(d'), T(d''))
\]   
considered in section \ref{sec:review} corresponds to a definable morphism between these objects. This is true since for every edge $f:d'\to d''$ in the diagram $D$ both the arrows
\[
T(f) \circ - :T(d')^{\textrm{dim}(T(d'))} \to T(d'')^{\textrm{dim}(T(d'))}
\]
and 
\[
\tilde{f}:T(d'')^{\textrm{dim}(T(d''))} \to T(d'')^{\textrm{dim}(T(d'))},
\]
where $\tilde{f}$ is the homomorphism induced by the $\textrm{dim}(T(d')) \times \textrm{dim}(T(d''))$-matrix $M_{f}$   given by the representation of the homomorphism $T(f):T(d')\to T(d'')$ in the chosen bases for $T(d')$ and $T(d'')$ (more explicitly, $\tilde{f}$ assigns to a tuple $(a_{1}, \ldots, a_{\textrm{dim}(T(d''))})\in T(d'')^{\textrm{dim}(T(d''))}$ the tuple 
\[
\big( \mathbin{\mathop{\textrm{ $\sum$}}\limits_{j=1, \ldots, \textrm{dim}(T(d''))} k^{f}_{(1,j)}}a_{j}, \ldots,  \mathbin{\mathop{\textrm{ $\sum$}}\limits_{j=1, \ldots, \textrm{dim}(T(d''))} k^{f}_{(\textrm{dim}(T(d')),j)}}a_{j} \big ) \in T(d'')^{\textrm{dim}(T(d'))},
\]
where the $k^{f}_{(i,j)}$ are the coefficients in $k$ of the matrix $M_{f}$) are definable over $L_{D}$. Indeed, the language $L_{D}$ contains a function symbol for each edge $f$ in $D$, a unary function symbol for each element of the field $k$ and the binary function symbols $+$ on each sort $d$ formalizing the sum operation on $T(d)$.

Now that we have identified the algebra $\End(T|_{F})$ with an object in the image of $\chi$, to conclude the proof of the essential surjectivity of $\chi$ it remains to show that every endomorphism of it as a \emph{left} $\End(T|_{F})$-module is definable, i.e. it comes from an endomorphism of ${\cal C}_{{\mathbb T}_{T}}$. Such endomorphisms correspond exactly to the elements of the algebra $\End(T|_{F})$, via the assignment sending such an element to the endomorphism of multiplication on the \emph{right} by it. 
 
Now, an element of $\End(T|_{F})$ is a collection of endomorphisms $\alpha_{d}$ on the $T(d)$ (for each $d\in D$). The associated endomorphism is definable since it acts at each component $d$ as the composition with $\alpha_{d}$ on the right, which is given by the multiplication with the matrix obtained by representing $\alpha_{d}$ in terms of the chosen basis for $T(d)$. This completes the proof of the essential surjectivity of $\chi$.

Let us now prove that the functor $\chi$ is full. The objects in the image of $\chi$ are quotients $K\slash K'$, where $K$ and $K'$ are in the following diagram (cf. Corollary \ref{cor_description_syntactic}):  
\begin{center}   
\begin{tikzpicture}[scale=0.9, every node/.style={scale=0.9}]
\node (0) at (0,0) {$\textrm{Ker}(T(\vec{t}))$};
\node (1) at (5,0) {$T(a_{1})\times \cdots \times T(a_{n})\times T(b_{1})\times \cdots \times T(b_{m})$};
\node (2) at (11,0) {$T(d_{1})\times \cdots \times T(d_{r})$};
\node (3) at (1,1) {$K'$};
\node (4) at (1,2) {$K=\textrm{Ker}(T(\vec{s}))$};
\node (5) at (5,2) {$T(a_{1})\times \cdots \times T(a_{n})$};
\node (6) at (11,2) {$T(c_{1})\times \cdots \times T(c_{k})$};
\draw[->](1) to node [above]{$T(\vec{t})$} (2);
\draw[right hook->](0) to node {} (1);
\draw[right hook->](4) to node {} (5);
\draw[>->](3) to node [above]{} (4);
\draw[->](1) to node [above]{$T(\vec{t})$} (2);
\draw[->](5) to node [above]{$T(\vec{s})$} (6);
\draw[->](1) to node [right]{$\pi$} (5);
\draw[>->](3) to node [above]{ } (5);
\draw[->>](0) to node [above]{ } (3);
\end{tikzpicture}
\end{center}
where all the objects $a_{i}, b_{j}, c_{m}, d_{l}$ are in $D$ and all the terms in $\vec{s}$ and in $\vec{t}$ are over $L_{D}$.  

We observe that if $F$ is the full subdiagram of $D$ on any set of objects of $D$ containing all the objects of the form $a_{i}, b_{j}, c_{m}$ or $d_{l}$ then $K\slash K'$ is an $\End(T|_{F})$-module. Indeed, this follows at once from the naturality of endomorphisms of $T$, as model endomorphisms of the $L_{D}$-structure $T$ (cf. section \ref{sec:regularlogic}).  

Since the colimit $
\textrm{Comod}_{\textrm{fin}}(\End^{\vee}(T))=\colim{F\subseteq D \textrm{ finite }}\End(T|_{F})\textrm{-mod}_{\textrm{fin}}$ is filtered and each of the subcategories $\End(T|_{F})\textrm{-mod}_{\textrm{fin}}$ is full in the category $\textrm{Comod}_{\textrm{fin}}(\End^{\vee}(T))$, we can suppose without loss of generality that our arrow $\alpha:K\slash K' \to H\slash H'$ whose domain and codomain are in the image of $\chi$ lies in the category $\End(T|_{F})\textrm{-mod}_{\textrm{fin}}$ for some $F$ containing all the objects appearing in the diagrams defining $K$, $K'$, $H$ and $H'$, i.e. that it is a $k$-linear $\End(T|_{F})$-equivariant map. To prove that $\alpha$ is in the image of $\chi$ we shall argue as follows. As we did for proving the essential surjectivity of $\chi$, we choose a basis $\{   b_{1}^{d}, \ldots, b^{d}_{r_{d}}\}$ for each $T(d)$ (as $k$-vector space) for $d\in F$ as well as bases $\{v_{1}, \ldots, v_{p}\}$ for $K\slash K'$ and $\{w_{1}, \ldots, w_{q}\}$ for $H\slash H'$ over $k$. Since $K\slash K'$ is a $\End(T|_{F})$-module, the choice of the basis $\{v_{1}, \ldots, v_{p}\}$ for it induces a surjective $\End(T|_{F})$-equivariant homomorphism $u:\End(T|_{F})^{p}\to K\slash K'$ sending the canonical basis of $\End(T|_{F})^{p}$ to the basis $\{v_{1}, \ldots, v_{p}\}$; similarly, we have a $\End(T|_{F})$-equivariant homomorphism $v:\End(T|_{F})^{q}\to H\slash H'$  sending the canonical basis of $\End(T|_{F})^{q}$ to the basis $\{w_{1}, \ldots, w_{q}\}$ of $H\slash H'$. Since our arrow $\alpha$ is $\End(T|_{F})$-equivariant, it lifts to an $\End(T|_{F})$-equivariant homomorphism $\tilde{\alpha}:\End(T|_{F})^{p} \to \End(T|_{F})^{q}$ making the following diagram commute: 
\[  
\xymatrix {
\End(T|_{F})^{p} \ar[d]^{\tilde{\alpha}} \ar[r]^{u} &  K\slash K' \ar[d]^{\alpha} \\
\End(T|_{F})^{q}  \ar[r]^{v} & H\slash H'. } 
\]

By the first part of the proof, the kernel of $u$ and that of $v$ can both be identified with objects in the image of $\chi$. To prove that $\alpha$ is in the image of $\chi$ as well, it suffices to prove that $u$ and $v$ are, using the identification of $\End(T|_{F})^{p}$ with a definable subobject of $(\mathbin{\mathop{\textrm{ $\prod$}}\limits_{d\in F}} T(d)^{r_{d}})^{p}$ and of $\End(T|_{F})^{q}$ with a definable subobject of $(\mathbin{\mathop{\textrm{ $\prod$}}\limits_{d\in F}} T(d)^{r_{d}})^{q}$ (induced by the above choice of basis for the $T(d)$). Let us prove this for $u$, the argument for $v$ being perfectly analogous. To this end, we notice that under the identification of $\End(T|_{F})$ with a definable subobject of $\mathbin{\mathop{\textrm{ $\prod$}}\limits_{d\in F}} T(d)^{r_{d}}$, an element $\xi\in \End(T|_{F})$ corresponds to the function $f_{\xi}$ such that $f_{\xi}(d)(i)=\xi(d)(b^{d}_{i})$ for all $d\in F$ and $i\in \{1, \ldots, r_{d}\}$. Now, since $K$ is a subobject of $T(a_{1})\times \cdots \times T(a_{n})$, we can express each of the basis vectors $v_{j}$ as a $k$-linear combination of the basis elements $b^{d}_{i}$ (for $d\in F$ and $i\in \{1, \ldots, r_{d}\}$), say $v_{j}=\mathbin{\mathop{\textrm{ $\sum$}}\limits_{d\in F, i=1, \ldots, r_{d}}} k^{j}_{(d, i)}b^{d}_{i}$. Now, the arrow $u$ sends a tuple $(\xi_{1}, \ldots, \xi_{p})\in \End(T|_{F})^{p}$ to the element $\xi_{1}(v_{1})+ \cdots + \xi_{p}(v_{p})$. But we have 

{\flushleft $\xi_{1}(v_{1})+ \cdots + \xi_{p}(v_{p})=\mathbin{\mathop{\textrm{ $\sum$}}\limits_{d\in F, i=1, \ldots, r_{d}}} k^{1}_{(d, i)}\xi_{1}(d)(b^{d}_{i})+ \cdots + \mathbin{\mathop{\textrm{ $\sum$}}\limits_{d\in F, i=1, \ldots, r_{d}}} k^{p}_{(d, i)}\xi_{p}(d)(b^{d}_{i})
$ $= \mathbin{\mathop{\textrm{ $\sum$}}\limits_{d\in F, i=1, \ldots, r_{d}}} k^{1}_{(d, i)}f_{\xi_{1}}(d)(i)+ \cdots + \mathbin{\mathop{\textrm{ $\sum$}}\limits_{d\in F, i=1, \ldots, r_{d}}} k^{p}_{(d, i)}f_{\xi_{p}}(d)(i)$,} which shows that $u$ is definable.   
\end{proof}

The following result is an immediate consequence of Theorem \ref{thm:equivcat} and Corollary \ref{cor_description_syntactic}(iii).

\begin{corollary}\label{cor:explicitdescriptionobjects}
Let $D$ be a diagram and $T:D\to k\textrm{-vect}_{\textrm{f}}$ a representation. Let $i_{D}:D\to\textrm{Comod}_{\textrm{fin}}(\End^{\vee}(T))$ be the canonical representation. Then the objects of $\textrm{Comod}_{\textrm{fin}}(\End^{\vee}(T))$ are, up to isomorphism, quotients of the form $K\slash K'$, where $K$ and $K'$ sit in a diagram:
\begin{center}   
\begin{tikzpicture}[scale=0.88, every node/.style={scale=0.88}]
\node (0) at (0,0) {$\textrm{Ker}(i_{D}(\vec{t}))$};
\node (1) at (5,0) {$i_{D}(a_{1})\times \cdots \times i_{D}(a_{n})\times i_{D}(b_{1})\times \cdots \times i_{D}(b_{m})$};
\node (2) at (11,0) {$i_{D}(d_{1})\times \cdots \times i_{D}(d_{r})$};
\node (3) at (1,1) {$K'$};
\node (4) at (1,2) {$K=\textrm{Ker}(i_{D}(\vec{s}))$};
\node (5) at (5,2) {$i_{D}(a_{1})\times \cdots \times i_{D}(a_{n})$};
\node (6) at (11,2) {$i_{D}(c_{1})\times \cdots \times i_{D}(c_{k})$};
\draw[->](1) to node [above]{$i_{D}(\vec{t})$} (2);
\draw[right hook->](0) to node {} (1);
\draw[right hook->](4) to node {} (5);
\draw[>->](3) to node [above]{} (4);
\draw[->](1) to node [above]{$i_{D}(\vec{t})$} (2);
\draw[->](5) to node [above]{$i_{D}(\vec{s})$} (6);
\draw[->](1) to node [right]{$\pi$} (5);
\draw[>->](3) to node [above]{ } (5);
\draw[->>](0) to node [above]{ } (3);
\end{tikzpicture}
\end{center}
where all the objects $a_{i}, b_{j}, c_{m}, d_{l}$ are in $D$ and all the terms in $\vec{s}$ and in $\vec{t}$ are over $L_{D}$.  

Moreover, any subobject of $i_{D}(a_{1})\times \cdots \times i_{D}(a_{n})$ in $\textrm{Comod}_{\textrm{fin}}(\End^{\vee}(T))$ is isomorphic to one of the form $K'\mono i_{D}(a_{1})\times \cdots \times i_{D}(a_{n})$ specified in the above diagram.
\end{corollary}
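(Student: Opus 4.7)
The claim follows by transporting the explicit description of objects of ${\cal C}_{{\mathbb T}_{T}}$ supplied by Corollary \ref{cor_description_syntactic}(iii) across the equivalence $\chi$ constructed in Theorem \ref{thm:equivcat}. First I would apply Corollary \ref{cor_description_syntactic}(iii) to the category ${\cal C}_{{\mathbb T}_{T}}$ itself, taking the generating family ${\cal F}$ to be the image of $\tilde{T}$ together with the arrows formalising the $R$-module structure on each $\tilde{T}(d)$ (in the sense of the Remark following Theorem \ref{thm:regsubcategory}). By the very construction of ${\cal C}_{{\mathbb T}_{T}}$ as the effectivization of the regular syntactic category of ${\mathbb T}_{T}=\Th(T)$, together with the fact that $\tilde{T}$ is the associated universal (hence conservative) model, the induced functor $F_{\cal F}$ of Theorem \ref{thm:regsubcategory} is (canonically identifiable with) the identity on ${\cal C}_{{\mathbb T}_{T}}$, and is in particular full on objects. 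Corollary \ref{cor_description_syntactic}(iii) then describes every object of ${\cal C}_{{\mathbb T}_{T}}$, up to isomorphism, as a quotient $K/K'$ of the shape displayed in the statement but with $\tilde{T}$ in place of $i_{D}$, the $s_{i}$ and $t_{j}$ being terms over $L_{D}$.

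Next, I would transport this description across $\chi$. By Theorem \ref{thm:equivcat}, $\chi$ is an equivalence satisfying $\chi\circ\tilde{T}=i_{D}$; being exact, $\chi$ preserves finite products, kernels, images under projections, and quotients by subobjects, so it carries the diagram produced above to a diagram of exactly the same shape in $\textrm{Comod}_{\textrm{fin}}(\End^{\vee}(T))$ with every occurrence of $\tilde{T}$ replaced by $i_{D}$. Since $\chi$ is essentially surjective, every object of $\textrm{Comod}_{\textrm{fin}}(\End^{\vee}(T))$ is isomorphic to such an image, which yields the first assertion of the corollary.

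For the closing statement about subobjects of $i_{D}(a_{1})\times\cdots\times i_{D}(a_{n})$, I would combine the fully faithful character of $\chi$ — which puts the subobjects of $\tilde{T}(a_{1})\times\cdots\times\tilde{T}(a_{n})$ in bijection with the subobjects of $i_{D}(a_{1})\times\cdots\times i_{D}(a_{n})$ — with the description of subobjects furnished by Corollary \ref{cor_description_syntactic}(i): every subobject of $\{\vec{x}^{\vec{a}}.\top\}$ in ${\cal C}_{{\mathbb T}_{T}}$ is isomorphic to some $\{\vec{x}^{\vec{a}}.\phi\}$, whose interpretation in $\tilde{T}$ is precisely the projected-kernel form required for $K'$. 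The only point deserving attention, and the main (if minor) obstacle I foresee, is verifying the fullness-on-objects hypothesis of Corollary \ref{cor_description_syntactic}(iii) when it is applied inside ${\cal C}_{{\mathbb T}_{T}}$; this is however immediate from the identification of $F_{\cal F}$ with the identity, so the argument reduces to a routine transport along the equivalence $\chi$.
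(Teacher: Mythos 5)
Your argument is correct and follows essentially the same route as the paper, which derives the corollary directly as an immediate consequence of Theorem \ref{thm:equivcat} combined with Corollary \ref{cor_description_syntactic}(iii). The extra care you take in checking that $F_{\cal F}$ is full on objects (being identifiable with the identity on ${\cal C}_{{\mathbb T}_{T}}$, since $\tilde{T}$ is the conservative universal model) and in transporting the description along the exact equivalence $\chi$ is a sound elaboration of what the paper leaves implicit.
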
\qed

\begin{remarks}
\begin{enumerate}[(a)]
\item In light of Theorem \ref{thm:main}, Theorem \ref{thm:equivcat} provides an alternative proof of the classical Nori's Tannakian theorem, and also of the classical theorem in Tannaka duality (cf. \cite{SR}, \cite{DeligneMilne} and \cite{JS}) asserting that any $k$-linear abelian category $\cal A$ with an exact faithful functor $U:{\cal A}\to k\textrm{-vect}_{\textrm{f}}$ is equivalent to the category $\textrm{Comod}_{\textrm{fin}}(\End^{\vee}(U))$ (this latter result follows by taking $D$ equal to the underlying diagram of $\cal A$ and applying the universal property of Theorem \ref{thm:main}).

\item As it is clear from its proof, Theorem \ref{thm:equivcat} generalizes to a Noetherian ring $R$, replacing $k\textrm{-vect}_{\textrm{f}}$ with $\Rmod_{f}$, bases with systems of generators, and $\textrm{Comod}_{\textrm{fin}}(\End^{\vee}(T))$ with $\colim{F \subseteq D \textrm{ finite}}\End (T|_{F})\text{-{\rm mod}}_{\rm fin}$.

\item In \cite{HMS} (Proposition B.9) and \cite{Arapura} (Lemma 2.2.5) it is observed that every object of the category $\textrm{Comod}_{\textrm{fin}}(\End^{\vee}(T)$ is a subquotient (i.e., a subobject of a quotient) of a finite direct sum of objects of the form $i_{D}(d)$ (for $d\in D$). It follows from Corollary \ref{cor:explicitdescriptionobjects} that the following more specific description holds: every object $A$ of $\textrm{Comod}_{\textrm{fin}}(\End^{\vee}(T))$ fits in an exact sequence  
\[
0 \rightarrow K' \mono K \to A \to 0,
\]
where $K$ and $K'$ are as in the statement of the corollary. 

\end{enumerate}
\end{remarks}

\section{Application to motives}\label{sec:applications}

In this section we discuss the application to motives of the general construction presented above, also in relation to the classical construction of Nori motives.

\subsection{Nori's diagram and Betti homology}

Nori applies his general construction of the category ${\cal C}_{T}$ to propose a definition of a category of (mixed) motives over a given base field. To this end, he has to choose a diagram $D$ defined in terms of schemes over that base field, a ring or coefficients field $R$ and a representation $$T:D\to \Rmod_{\rm f}$$
given by a (co)homological functor.

Since the (co)homological functor must take values in the finite-type modules over $R$, and one looks for a category of motives with coefficients in $R={\mathbb Z}$ or $R={\mathbb Q}$, Nori proposes a definition based on Betti homology.

\begin{definition}\label{defNori}
Given a subfield $K$ of $\mathbb C$, let $D$ be the diagram defined as follows:
\begin{itemize}
	\item the objects of $D$ are the triples $(X,Y,i)$ consisting in an element $X$ of a set of representatives of separated finite-type schemes over $K$, a closed subscheme $Y$ of $X$ and a non-negative integer $i\in {\mathbb N}$;
	
	\item The edges of $D$ are on one hand of the form $$(X,Y,i) \to (X',Y',i) \quad\quad(\forall i\geq 0),$$ 
	for any commutative square
	\[  
	\xymatrix@1@=3pt@M=3pt {
		& & & & & X & \to & X'  & & & & &\\
		& & & & & & &  & &  & \\
		& & & & & Y \ar@{^{(}->}[uu] & \to & Y' \ar@{^{(}->}[uu]}
	\]
    and on the other hand of the form $$(X,Y,i)\to (Y,Z,i-1) \quad\quad(\forall i\geq 1),$$ for each sequence of closed subscheme inclusions
    $$Z\hookrightarrow Y \hookrightarrow X.$$ 
\end{itemize}
Taking $R={\mathbb Z}$ or $R={\mathbb Q}$, consider the representation
$$T:D\to \Rmod_{\rm f}$$
which assigns to each triplet $(X,Y,i)$ its relative Betti homology $H_{i}(X, Y)$ with coefficients in $R$, endowed with the functorial homomorphisms and the boundary homomorphisms. The category of Nori motives is by definition the abelian $R$-linear category ${\cal C}_{T}$ associated with this choice of $D$ and $T$.
\end{definition}

The interest of this definition is illustrated for instance by the following theorem.

\begin{theorem}[\cite{ABV}]
In Definition \ref{defNori} above, take $R={\mathbb Z}$ and restrict the diagram $D$ to triplets $(X, Y, i)$ such that $i=0$ (resp. $i\leq 1$). Then:
\begin{enumerate}[(i)]
	\item In the case of the condition $i=0$, the universal Nori abelian category ${\cal C}_{T}$ is equivalent to the abelian category of Artin $0$-motives, that is to the category of constructible \'etale sheaves of abelian groups on $\textrm{Spec}(K)$.
	
	\item In the case of the condition $i\leq 1$, the universal Nori abelian category ${\cal C}_{T}$ is equivalent to the abelian category of Deligne $1$-motives with torsion. The objects of this category are complexes of abelian group schemes on $\textrm{Spec}(K)$ of the form ${\cal F}\to {\cal G}$, where
	\begin{itemize}
		\item $\cal F$ is a constructible \'etale sheaf of abelian groups on $\textrm{Spec}(K)$,
		\item $\cal G$ is a semi-abelian variety on $\textrm{Spec}(K)$.  
	\end{itemize}
	 The arrows are obtained from complex morphisms by formally inverting quasi-isomorphisms.
	\end{enumerate}	
\end{theorem}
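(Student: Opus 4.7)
In both cases the plan is to apply the universal property of the Nori category ${\cal C}_{T}$ (Theorem \ref{thm:main}) to reduce the problem to the construction of a single canonical exact functor to the candidate target category, and then to show that this functor is an equivalence. In each case I would exhibit the target category as an $R$-linear abelian category endowed with a representation of the relevant subdiagram of $D$ and an exact faithful functor to $\Rmod_{\rm f}$ whose composition recovers $T$; the universal property of ${\cal C}_{T}$ then produces a canonical exact faithful functor to the target, and one is left to verify fullness and essential surjectivity.

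For part (i) the candidate target is the category of finitely generated $\mathbb{Z}$-modules equipped with a continuous $\textrm{Gal}(\bar K/K)$-action, equivalent to constructible \'etale sheaves of abelian groups on $\textrm{Spec}(K)$; the forgetful functor plays the role of Betti realization. The representation would send $(X,Y,0)$ to $H_{0}(X(\mathbb{C}),Y(\mathbb{C});\mathbb{Z})$ with its natural Galois action, and the relevant edges to the obvious functorial maps. Essential surjectivity amounts to the fact that every finite continuous $\textrm{Gal}(\bar K/K)$-set is $\pi_{0}$ of a finite \'etale $K$-scheme, so that every Artin $0$-motive appears as a subquotient of a finite direct sum of objects of the form $\tilde{T}(X,\emptyset,0)$ and hence, by the explicit description of objects in Corollary \ref{cor_description_syntactic}, lies in the image of the canonical functor up to isomorphism. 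Fullness follows from the parallel description of morphisms in Corollary \ref{cor_description_syntactic} together with the fact that every $\textrm{Gal}(\bar K/K)$-equivariant homomorphism between such representations comes from a morphism of \'etale $K$-algebras.

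For part (ii) the candidate target is the category ${\cal M}_{1}$ of Deligne $1$-motives with torsion, equipped with the exact faithful Betti realization sending $[\mathcal{F}\to\mathcal{G}]$ to the first Betti homology of the associated mixed Hodge structure. The representation $\tilde{T}:D|_{i\leq 1}\to{\cal M}_{1}$ would send $(X,Y,0)$ to an Artin $0$-motive viewed as a $1$-motive concentrated in degree $0$ and $(X,Y,1)$ to the relative Picard-type $1$-motive encoding $H_{1}(X,Y;\mathbb{Z})$; standard functoriality of the Picard--Albanese formalism handles the two families of edges of $D$, including the boundary maps $(X,Y,1)\to (Y,Z,0)$. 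The main obstacle, and the geometric heart of the argument of \cite{ABV}, is essential surjectivity: one must show that every semi-abelian variety over $K$ arises as $\textrm{Pic}^{0}$ of some pair $(X,Y)$ of dimension at most one (via generalized Jacobians of smooth curves with marked points), and that the arrow $\mathcal{F}\to\mathcal{G}$ of an arbitrary $1$-motive with torsion is likewise realized from such a pair. Once this is in place, fullness and faithfulness again follow from Corollary \ref{cor_description_syntactic} together with the fact that morphisms of Deligne $1$-motives are determined by their effect on the underlying lattice and semi-abelian components, both of which are controlled by the data of the edges of $D$.
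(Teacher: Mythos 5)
First, a point of orientation: the paper does not prove this theorem at all. It is stated with the citation \cite{ABV} purely as an illustration of the interest of Nori's construction, so there is no in-paper proof to compare yours against; the actual argument is the main content of Ayoub and Barbieri-Viale's article. Your overall strategy --- use the universal property of ${\cal C}_{T}$ (Theorem \ref{thm:Nori} or \ref{thm:main}) to produce a canonical exact functor to the candidate abelian category and then check fullness and essential surjectivity --- is indeed the natural frame, and you correctly locate the geometric heart of part (ii). But as written the proposal is a plan rather than a proof, and several of its load-bearing steps are exactly the nontrivial theorems of \cite{ABV} asserted rather than established.

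Concretely: (1) To invoke the universal property you must first exhibit the target as an abelian ${\mathbb Z}$-linear category with an \emph{exact and faithful} functor to ${\mathbb Z}\textrm{-mod}_{\rm f}$ whose composite with a representation of the restricted diagram recovers $T$. For part (i) your representation sends $(X,Y,0)$ to ``$H_{0}(X({\mathbb C}),Y({\mathbb C});{\mathbb Z})$ with its natural Galois action'', but Betti homology carries no Galois action; what you need is the \'etale $H_{0}$ (built from $\pi_{0}(X_{\bar K})$) together with the comparison isomorphism to Betti $H_{0}$, and one must check that this comparison is compatible with all edges of $D$ so that the factorisation of $T$ holds in the sense of Remark \ref{rem:isomorphisms}. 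For part (ii) the exactness and faithfulness of the Betti realization on Deligne $1$-motives \emph{with torsion} (a localization of a category of complexes at quasi-isomorphisms) is itself a nontrivial result that cannot be waved through. (2) Essential surjectivity and fullness are where almost all of the work of \cite{ABV} lives: showing that every $1$-motive with torsion is, up to isomorphism, a subquotient of realizations of pairs of curves, and that every morphism of $1$-motives is ``definable'' from the edges of $D$ in the sense required by Corollary \ref{cor_description_syntactic}, requires the Picard--Albanese machinery, weight arguments, and torsion control that you only gesture at. (3) A small simplification you miss: faithfulness of the induced functor $\xi$ is automatic, since $F\circ \xi\cong F_{T}$ is faithful; only fullness and essential surjectivity need separate arguments. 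In short, the skeleton is right, but the flesh --- which is the actual theorem being cited --- is absent.
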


Another illustration of the interest of Nori's definition is provided by the following theorem.

\begin{theorem}[\cite{Kontsevich} and \cite{HMS}]
	In Definition \ref{defNori} above, take as coefficient field $R={\mathbb Q}$. Let us start from the diagram consisting of the triplets $(X, Y, i)$ endowed with the product defined by $$(X, Y, i)\times (X', Y', i')=(X\times Y' \sqcup X'\times Y, i+i')$$ and take $D$ equal to the diagram obtained from it by formally inverting the product by the object $({\mathbb G}_{1}, \{1\}, 1)$. Then
	\begin{enumerate}[(i)]
		\item The universal Nori abelian category ${\cal C}_{T}$ endowed with the functor $F_{T}:{\cal C}_{T}\to {\mathbb Q}\textrm{-vect}_{\textrm{f}}$ of relative Betti homology has naturally the structure of a rigid Tannakian category endowed with a fiber functor. It defines a motivic Galois group $G_{\textrm{mot}}$. 
		
		\item If the base field is $K={\mathbb Q}$ and $P$ is the algebra of Kontsevich-Zagier ``formal periods'' (in the sense of \cite{KZ}), $\textrm{Spec}(P)$ is a torsor under the action of $G_{\textrm{mot}}$.  
	\end{enumerate}
\end{theorem}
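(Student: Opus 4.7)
For part (i), the plan is to transfer the tensor structure from the diagram $D$ (given by the product on triples) to the category $\mathcal{C}_{T}$ via the universal property of Theorem \ref{thm:main}. Indeed, the Künneth isomorphism $H_{i}(X, Y) \otimes H_{i'}(X', Y') \simeq H_{i+i'}(X \times X', X \times Y' \cup X' \times Y)$ can be expressed by a family of regular sequents over $L_{D}$ which are valid in $T$; these sequents belong therefore to the theory $\mathbb{T}_{T}$, so the product on $D$ lifts to a symmetric monoidal structure on $\mathcal{C}_{T}$ for which $F_{T}$ is a tensor functor. The formal inversion of $(\mathbb{G}_{m}, \{1\}, 1)$ represents the Tate twist $\mathbb{Q}(-1)$; combined with Poincaré--Lefschetz duality, which produces $H_{i}(X)^{\vee} \simeq H_{2d-i}(X)(d)$ for smooth proper $X$ of dimension $d$, this yields dualisability of every object of $\mathcal{C}_{T}$. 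Hence $\mathcal{C}_{T}$ is a neutral Tannakian category over $\mathbb{Q}$ with fiber functor $F_{T}$, and the Tannakian reconstruction theorem (Saavedra--Rivano, Deligne--Milne) produces the pro-algebraic motivic Galois group $G_{\textrm{mot}} := \operatorname{Aut}^{\otimes}(F_{T})$ together with an equivalence $\mathcal{C}_{T} \simeq \operatorname{Rep}_{\mathbb{Q}}(G_{\textrm{mot}})$.

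For part (ii), I would first produce a second fiber functor $F_{\textrm{dR}}: \mathcal{C}_{T} \to K\textrm{-vect}_{\textrm{f}}$ given by algebraic de Rham cohomology on each triple $(X, Y, i) \mapsto H^{i}_{\textrm{dR}}(X, Y/K)$. To show this lifts to $\mathcal{C}_{T}$, one needs the regular sequents satisfied by $T$ to be also satisfied by the de Rham representation $T_{\textrm{dR}}$; after base change to $\mathbb{C}$ this is provided by the Grothendieck--Deligne comparison isomorphism $H^{i}_{\textrm{dR}}(X, Y) \otimes_{K} \mathbb{C} \simeq H_{i}^{B}(X, Y; \mathbb{Q})^{\vee} \otimes_{\mathbb{Q}} \mathbb{C}$. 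The Kontsevich algebra $P$ of formal periods is generated by symbols $[X, Y, i, \omega, \gamma]$ with $\omega \in H^{i}_{\textrm{dR}}(X, Y)$ and $\gamma \in H_{i}^{B}(X, Y; \mathbb{Q})$, modulo $\mathbb{Q}$-bilinearity in $(\omega, \gamma)$, functoriality in $D$, and boundary relations. Interpreting these symbols as universal matrix coefficients, one identifies $\operatorname{Spec}(P)$ with the affine $\mathbb{Q}$-scheme $\operatorname{Isom}^{\otimes}(F_{\textrm{dR}}, F_{T})$ of tensor isomorphisms between the two fiber functors (working over $\mathbb{C}$ and descending), and by the Tannakian formalism this scheme is naturally a torsor under $G_{\textrm{mot}}$.

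The hard part will be the construction of $F_{\textrm{dR}}$ as an honest fiber functor on $\mathcal{C}_{T}$. By Theorem \ref{thm:independence}(ii), producing such a functor compatible with $\tilde{T}$ and $\tilde{T}_{\textrm{dR}}$ would require $\operatorname{Th}(T) = \operatorname{Th}(T_{\textrm{dR}})$---that is, the sought-after independence of the category of motives from the realisation. The Grothendieck--Deligne comparison does not give this over $\mathbb{Q}$; the standard workaround is to first construct the functor rationally via $\mathcal{C}_{T} \otimes_{\mathbb{Q}} \mathbb{C} \to K\textrm{-vect} \otimes_{K} \mathbb{C}$ and then extract $F_{\textrm{dR}}$ by faithfully flat descent. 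A second, more combinatorial obstacle is matching Kontsevich's generators-and-relations presentation of $P$ with the functor-of-points description of $\operatorname{Isom}^{\otimes}(F_{\textrm{dR}}, F_{T})$; once both obstacles are overcome, the rigidity axioms and the torsor axioms follow from routine Tannakian arguments.
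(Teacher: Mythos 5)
A preliminary remark on the comparison itself: the paper contains no proof of this theorem. It is stated with the attribution to \cite{Kontsevich} and \cite{HMS} purely as an illustration of the interest of Nori's definition, and no proof follows it in the text. So there is no internal argument to measure yours against; what you have written is an outline of the external proof from the literature, and it should be judged on its own terms.

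On those terms, your sketch follows the standard route (tensor structure induced by the product on the diagram, rigidity from inverting the Lefschetz/Tate object, Tannakian reconstruction, and $\textrm{Spec}(P)$ as the scheme of tensor isomorphisms between the de Rham and Betti fibre functors), but the two steps needed to make it work are exactly the ones you do not supply. First, the claim that the K\"unneth isomorphism ``can be expressed by a family of regular sequents over $L_{D}$'' and therefore lifts the product on $D$ to a symmetric monoidal structure on ${\cal C}_{{\mathbb T}_{T}}$ is unjustified: the signature $L_{D}$ has no symbol for tensor products, and regular logic over $L_{D}$ only speaks of finite products of the sorts and $R$-linear combinations of the edges, so an isomorphism $H_{i}(X,Y)\otimes H_{i'}(X',Y')\simeq H_{i+i'}(X\times X', X\times Y'\cup X'\times Y)$ is not a sequent of the admissible form. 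The authors explicitly list ``the tensor structure induced from K\"unneth-type formulas'' as \emph{future work}; in the classical treatments this step is the technical heart of part (i) and requires the Basic Lemma / good-pairs machinery. Second, for part (ii) you correctly observe that obtaining $F_{\textrm{dR}}$ as a fibre functor on ${\cal C}_{{\mathbb T}_{T}}$ via the universal property of Theorem \ref{thm:main} (or Remark \ref{rem:generalization}(a)) would require every regular sequent valid in the Betti representation to hold in the de Rham one --- essentially the open independence problem --- but the proposed workaround (base change to $\mathbb C$ and faithfully flat descent) and the matching of Kontsevich's generators-and-relations presentation of $P$ with $\operatorname{Isom}^{\otimes}(F_{\textrm{dR}},F_{T})$ are only named as obstacles, not overcome. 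The proposal is therefore a reasonable roadmap of \cite{HMS}, not a proof, and in particular it does not show that the syntactic machinery of this paper suffices to establish the theorem.
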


\subsection{A general criterion for the existence of categories of mixed motives}

We shall work over a base field $K$ of arbitrary characteristic. We take for coefficient field $R={\mathbb Q}$ or possibly $R={\mathbb Z}$. We take as $D$ a diagram deduced from the category of finite-type separated schemes over $K$ (as for instance in Definition \ref{defNori}) and as a representation $T:D\to {\mathbb Q}\textrm{-vect}$ or $T:D\to {\mathbb Z}\textrm{-vect}$ a cohomological or homological functor with coefficients of characteristic $0$ (including in fields much larger than $\mathbb Q$ such that the $\ell$-adic fields). 
 
Concretely, the objects of $D$ consist of a geometric part and of an index:
\begin{itemize}
	\item The geometric part of an object of $D$ is an element of a set of representatives of finite-type separated schemes over $K$ (possibly subject to conditions such as: smooth, projective, etc.), or a pair of inclusions of such schemes $Y\hookrightarrow X$ as in Definition \ref{defNori}, or possibly more complex diagrams with values in the category of finite-type separated schemes over $K$.
	
	\item The index which completes the geometric part of an object of $D$ consists at least of an integer $i\geq 0$ which will be the degree of the homology or cohomology spaces associated with the object. It can possibly be completed by another integer $j\in {\mathbb Z}$ which will allow a Tate-type torsion of coefficients of cohomology spaces (assigning, for instance, to a scheme completed by a double index $(i, j)$ the space of $\ell$-adic cohomology $H^{i}(\overline{X}, {\mathbb Q}_{\ell}(j))$ or Bloch's higher Chow group $\textrm{CH}^{j}(X, 2j-i)_{\mathbb Q}$). It could also be completed by a binary index which would serve for assigning to the object a cohomology space when the index takes the first value and a homology space (or a cohomology with compact support space) when the index takes the other value: this would allow to consider cohomology and homology symultaneously.
	
	\item The edges of the diagram $D$ are associated with morphisms of schemes (or of diagrams of schemes) or possibly with correspondences which one knows to induce homomorphisms between the cohomology or homology spaces associated with the objects of $D$ by the representation $T$. The edges of $D$ may contain `boundary edges' as in the case of Definition \ref{defNori}. Notice that when the edges of $D$ are associated with correspondences, one does not have to care about their composites since $D$ is just a diagram.    
\end{itemize}

The representation $$T:D\to {\mathbb Q}\textrm{-vect} \quad \quad (\textrm{or }  T:D\to {\mathbb Z}\textrm{-mod})$$  can be
\begin{itemize}
	\item Betti homology or cohomology if $K$ is a subfield of $\mathbb C$,
	\item De Rham cohomology if $\textrm{char}(K)=0$,
	\item $\ell$-adic cohomology if $\ell\neq \textrm{char}(K)$,
	\item $p$-adic (crystalline or rigid) cohomology if $p=\textrm{char}(K)$,
	\item the motivic cohomology defined by Bloch's higher Chow groups. 
\end{itemize}

Applying Theorem \ref{thm:main}, we obtain:

\begin{definition}
	Let $K$ be an arbitrary base field, $R$ equal to $\mathbb Q$ or possibly to $\mathbb Z$ and $T$ a representation as above. Then the construction of Theorem \ref{thm:main} associates to these data a universal abelian $\mathbb Q$-linear (or $\mathbb Z$-linear) category ${\cal C}_{{\mathbb T}_{T}}$ endowed with a representation $$\tilde{T}:D\to {\cal C}_{{\mathbb T}_{T}}$$ and a faithful exact functor $$F_{T}:{\cal C}_{{\mathbb T}_{T}} \to {\mathbb Q}\textrm{-vect}\quad \quad (\textrm{or }  F_{T}:{\cal C}_{{\mathbb T}_{T}} \to {\mathbb Z}\textrm{-mod}).$$ 
	This category is a candidate for the category of motives over $K$.
\end{definition}

\begin{remarks}
	\begin{enumerate}[(a)]
		\item As we have already observed, the fact that the cohomological representations take their values in infinite-dimensional vector spaces over $\mathbb Q$ does not constitute a problem.
		
		\item If $T$ is a representation $D\to {\mathbb Z}\textrm{-mod}$ such that the multiplication by a prime number $p$ is invertible in all the modules which are images under $T$ of objects of $D$ then the multiplication by $p$ is automatically invertible in the category ${\cal C}_{{\mathbb T}_{T}}$. For example, if $T$ is given by the $\ell$-adic cohomology with coefficients in ${\mathbb Z}_{l}$, the multiplication by any prime number $p\neq \ell$ is automatically invertible in ${\cal C}_{{\mathbb T}_{T}}$.  
		
	\end{enumerate}
\end{remarks}

Now, each choice of $D$ and $T$ defines a candidate ${\cal C}_{{\mathbb T}_{T}}$ for the category of motives over $K$. This is clearly too much! Nonetheless, Theorem \ref{thm:independence}(ii) tells us under which conditions these candidates are equivalent to each other.

\begin{corollary}
	Let $K$ be an arbitrary base field and $R={\mathbb Q}$ or $R={\mathbb Z}$ as above. Let us fix a diagram $D$ built as above starting from the category of schemes over $K$ and consider a family $\{T\}$ of representations $$T:D\to {\mathbb Q}\textrm{-vect} \quad \quad (\textrm{or }  T:D\to {\mathbb Z}\textrm{-mod})$$ defined by ``good'' cohomological functors such as the above-mentioned ones. Then the following conditions are equivalent:
	\begin{enumerate}[(i)]
		\item The cohomological functors $T\in \{T\}$ factor through a category of motives in the usual sense, that is, there exists a $\mathbb Q$-linear (or $\mathbb Z$-linear) abelian category $\cal M$, endowed with a representation $D\to {\cal M}$, such that each $T\in \{T\}$ factors as the composite of $D\to {\cal M}$ with an exact and faithful functor 
		$${\cal M}\to {\mathbb Q}\textrm{-vect} \quad \quad (\textrm{or }  {\cal M}\to {\mathbb Z}\textrm{-mod}).$$
		
		\item The categories ${\cal C}_{{\mathbb T}_{T}}$ associated with the different $T\in \{T\}$, endowed with the representation $\tilde{T}:D\to {\cal C}_{{\mathbb T}_{T}}$, are equivalent.
		
		\item The regular theories ${\mathbb T}_{T}$ of the representations $T\in \{T\}$ are identical.   
	\end{enumerate}
	
	\begin{proof}
		The equivalence of (ii) and (iii) follows from Theorem \ref{thm:independence}(ii). The fact that (ii) implies (i) is clear. To show that (i) implies (iii) it suffices to observe that for every exact and faithful functor $$F:{\cal M}\to {\mathbb Q}\textrm{-vect} \quad \quad (\textrm{or } F: {\cal M}\to {\mathbb Z}\textrm{-mod}),$$ two subobjects $V$ and $V'$ of a product of objects $V_{1}\times \cdots \times V_{n}$ in $\cal M$ verify the relation $V\subseteq V'$ if and only if $F(V)\subseteq F(V')$.
	\end{proof}
\end{corollary}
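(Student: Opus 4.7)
The plan is to handle the three implications separately, following the structure \,(ii)$\Leftrightarrow$(iii), (ii)$\Rightarrow$(i), (i)$\Rightarrow$(iii).

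The equivalence (ii)$\Leftrightarrow$(iii) would be obtained by direct application of Theorem \ref{thm:independence}(ii), applied to any pair $T, T' \in \{T\}$. That theorem characterises the existence of an equivalence ${\cal C}_{{\mathbb T}_{T}} \simeq {\cal C}_{{\mathbb T}_{T'}}$ compatible with the canonical representations of $D$ precisely as the equality $\Th(T) = \Th(T')$ of regular theories over $L_D$. Once this is known for every pair, (ii) and (iii) are manifestly equivalent.

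The implication (ii)$\Rightarrow$(i) is essentially formal: one chooses $\cal M$ to be any one of the mutually equivalent categories ${\cal C}_{{\mathbb T}_{T}}$, equipped with the canonical representation $\tilde{T}: D \to {\cal C}_{{\mathbb T}_{T}}$ and the faithful exact $R$-linear functor $F_T : {\cal C}_{{\mathbb T}_{T}} \to \Rmod$ supplied by Theorem \ref{thm:main}. The factorisations $T = F_T \circ \tilde T$ of Theorem \ref{thm:main} then give exactly the data required in (i).

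The main step is (i)$\Rightarrow$(iii). Assuming an $R$-linear abelian category $\cal M$ with a representation $S: D \to \cal M$ and, for each $T \in \{T\}$, an exact and faithful $R$-linear functor $F_T : {\cal M} \to \Rmod$ such that $T = F_T \circ S$, I would show $\Th(T) = \Th(S)$ for every $T$, where $\Th(S)$ is the regular theory of $S$ viewed as an $L_D$-structure in the effective regular category $\cal M$; this common value then gives (iii). By Corollary \ref{cor_description_syntactic}, applied in both $\cal M$ and $\Rmod$, the interpretation $[[\vec{x}. \phi]]_T$ in $\Rmod$ of any regular formula-in-context $\{\vec{x}. \phi\}$ over $L_D$ is built by finite limits, images and quotients from the $T(d)$ and from arrows given by terms; exactness of $F_T$ gives a canonical identification $F_T([[\vec{x}. \phi]]_S) = [[\vec{x}. \phi]]_T$. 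Since $F_T$ is exact and faithful between abelian categories it preserves and reflects subobject inclusions on any finite product, so for any two regular formulae in the same context the containment $[[\vec{x}. \psi]]_T \subseteq [[\vec{x}. \phi]]_T$ in $\Rmod$ holds if and only if $[[\vec{x}. \psi]]_S \subseteq [[\vec{x}. \phi]]_S$ holds in $\cal M$. Equivalently, a regular sequent over $L_D$ is valid in $T$ if and only if it is valid in $S$, hence $\Th(T) = \Th(S)$, independent of $T$.

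I do not anticipate a real obstacle: (ii)$\Leftrightarrow$(iii) is essentially a citation, (ii)$\Rightarrow$(i) is tautological once Theorem \ref{thm:main} is in hand, and the only substantive point (i)$\Rightarrow$(iii) reduces to the standard preservation-and-reflection of subobject containments on finite products by exact faithful functors between abelian categories, together with the explicit syntactic description of interpretations of regular formulae already established in section \ref{sec:subcategories}.
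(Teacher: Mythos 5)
Your proposal is correct and follows essentially the same route as the paper: (ii)$\Leftrightarrow$(iii) by citing Theorem \ref{thm:independence}(ii), (ii)$\Rightarrow$(i) by taking $\cal M$ to be one of the categories ${\cal C}_{{\mathbb T}_{T}}$, and (i)$\Rightarrow$(iii) via the observation that an exact faithful functor between abelian categories preserves and reflects inclusions of subobjects of finite products, applied to the interpretations of regular formulae. Your version merely spells out, via the intermediate theory $\Th(S)$ and Corollary \ref{cor_description_syntactic}, what the paper leaves implicit in its one-line argument for (i)$\Rightarrow$(iii).
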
 

We can describe the above-mentioned corollary by saying that if (mixed) motives actually exist then they have a logical nature; indeed, what the different cohomological functors must have in common with each other in order for motives to exist is their associated regular theories. If this is the case, the category of motives can be built from any of these cohomological functors (independently from all the others).

We note that the identity of the regular theories associated with two representations $T$ and $T'$ of $D$ implies
\begin{itemize}
	\item the identity of the vanishing conditions of the spaces associated by $T$ and $T'$ to a given object of $D$;  
	
	\item more generally, the identity of the vanishing conditions of the subspaces defined as the kernels of the homomorphisms associated by $T$ and $T'$ to a linear combination of composites of edges in $D$, for instance the subspaces $\textrm{Ker}(T(P(u)))$ and $\textrm{Ker}(T'(P(u)))$ associated with an endomorphism $u$ and a polynomial $P$ with coefficients in $\mathbb Q$. 
\end{itemize} 

On the other hand, the regular theories of the representations $T$ do not say anything on the dimensions of the spaces and subspaces. To take care of dimensions, one needs a richer syntax, for instance that of geometric or finitary first-order logic. 

\vspace{0.5cm}

\textbf{Acknowledgements:} The second named author and the third named author are grateful to Panagis Karazeris for pointing out that the proof that the category ${\cal C}_{{\mathbb T}_{T}}$ is abelian could be shortened by invoking a classical theorem of Tierney. 

The second named author has been supported by a Marie Curie INdAM-COFUND fellowship whilst preparing this work.

\vspace{1cm}

\textsc{Luca Barbieri-Viale}

{\small \textsc{Dipartimento di Matematica ``Federigo Enriques'', Universit\`a degli Studi di Milano, via Cesare Saldini 50, 20133 Milano, Italy.}\\
	\emph{E-mail address:} \texttt{luca.barbieri-viale@unimi.it}}

\vspace{0.5cm}

\textsc{Olivia Caramello} 

{\small \textsc{UFR de Math\'ematiques, Universit\'e de Paris VII, B\^atiment Sophie Germain, 5 rue Thomas Mann, 75205 Paris, France}\\
	\emph{E-mail address:} \texttt{olivia@oliviacaramello.com}}

\vspace{0.5cm}

\textsc{Laurent Lafforgue}

{\small \textsc{Institut des Hautes \'Etudes Scientifiques, 35 route de Chartres, 91440, Bures-sur-Yvette, France}\\
	\emph{E-mail address:} \texttt{laurent@ihes.fr}}

\end{document}